\newtheorem{thm}{Theorem}[section]
\newtheorem{lem}[thm]{Lemma}
\newtheorem{prop}[thm]{Proposition}
\newtheorem{cor}[thm]{Corollary}
\newtheorem{rmk}[thm]{Remark}
\newtheorem{definition}[thm]{Definition}
\newcommand{\CC}{\mathbb{C}}
\newcommand{\HH}{\mathbb{H}}
\newcommand{\QQ}{\mathbb{Q}}
\newcommand{\PP}{\mathbb{P}}
\newcommand{\ZZ}{\mathbb{Z}}
\newcommand{\A}[1]{\mathcal{A}_{#1}}
\newcommand{\M}[1]{\mathcal{M}_{#1}}
\newcommand{\CA}[1]{\overline{\mathcal{A}_{#1}}}
\newcommand{\CM}[1]{\overline{\mathcal{M}_{#1}}}
\newcommand{\CHF}{\overline{HF}}
\newcommand{\HF}{\mathcal{HF}}
\newcommand{\ep}{\epsilon}
\DeclareMathOperator{\Cat}{Cat}
\DeclareMathOperator{\Mat}{Mat}
\DeclareMathOperator{\Ima}{Im}
\DeclareMathOperator{\Sp}{Sp}
\DeclareMathOperator{\diag}{diag}
\DeclareMathOperator{\ord}{ord}
\DeclareMathOperator{\Pic}{Pic}
\DeclareMathOperator{\grad}{grad}
\numberwithin{equation}{section}
\title{The Locus of Plane Quartics with A Hyperflex}
\author{Xuntao Hu}
\address{Department of Mathematics, Stony Brook University, Stony Brook, New York, 11794}
\email{xuntao.hu@stonybrook.edu}
\date{}
\begin{document}

\maketitle
\begin{abstract}
Using the results of \cite{DPFSM}, we determine an explicit modular form defining the locus of plane quartics with a hyperflex among all plane quartics. As a result, we provide a direct way to compute the divisor class of the locus of plane quartics with a hyperflex within $\CM{3}$, first obtained in \cite{Cu89}. Moreover, the knowledge of such an explicit modular form also allows us to describe explicitly the boundary of the hyperflex locus in $\CM{3}$. As an example we show that the locus of banana curves (two irreducible components intersecting at two nodes) is contained in the closure of the hyperflex locus. We also identify an explicit modular form defining the locus of Clebsch quartics and use it to recompute the class of this divisor, first obtained in \cite{OS11}.
\end{abstract}

\section{Introduction}

We work over the field of complex numbers. A general line in $\PP^2$ intersects a plane quartic $C$ in four points. We call a line {\em bitangent} to $C$ if it intersects $C$ at two double points, denoted by $p$ and $q$. Thus $p+q=\frac{1}{2}K_C$ is an effective theta characteristic. The bitangent lines of a smooth plane quartic are in one-to-one correspondence with the gradients of theta functions with odd characteristics. The main interest of this paper lies in the case $p=q$, namely when a line intersects a plane quartic $C$ in a four-fold point. To fix notation, we call such a line a {\em hyperflex line}, and the intersection a {\em hyperflex point}. We call a smooth plane quartic that admits a hyperflex line a {\em hyperflex quartic}.

The locus $\HF$ of hyperflex quartics (in Teichm\"uller dynamics context also known as $\mathcal{H}^{odd}_3(4)$ or $\Omega\M{3}^{odd}(4)$) is a Cartier divisor (see \cite{Ver83} and \cite{Cu89}) in the moduli space of smooth genus three curves $\M{3}$. The class of its closure $[\overline{\HF}]$ in the Deligne-Mumford compactification $\CM{3}$ was computed in \cite{Cu89}:
$$
[\overline{\HF}]=308\lambda-32\delta_0-76\delta_1,
$$
where $\lambda$ is the Hodge class on $\overline{\mathcal{M}_3}$, and $\delta_0, \delta_1$ are the classes of the boundary divisors.

While the computation by Cukierman in \cite{Cu89} is algebraic, in this paper we determine explicitly the modular form defining the locus $\HF$, which gives a direct analytic approach to the problem.

We recall the notation in \cite{DPFSM}, in which a theta characteristic $(\ep,\delta)\in(\ZZ/2\ZZ)^3\times(\ZZ/2\ZZ)^3$ is denoted by $(i,j)$, where $i=4\ep_1+2\ep_2+\ep_3, j=4\delta_1+2\delta_2+\delta_3$. For instance, $([1,1,0],[0,1,1])$ is denoted by $(6,3)$.

Our main result is an explicit formula for a modular form $\Omega_{77}$, whose zero locus in $\A{3}(2)$ is the closure of the locus of plane quartics for which a fixed bitangent line is a hyperflex. Here $\A{3}(2)$ denotes the moduli space of principally polarized abelian varieties of dimension $3$ with a (full) level $2$ structure.
\begin{thm}
On $\A{3}(2)$, the modular form $\Omega_{77}(\tau)$ defined by:
\begin{equation*}
\begin{aligned}
\Omega_{77}(\tau):&= [\theta_{01}\theta_{10}\theta_{37}\theta_{43}\theta_{52}\theta_{75}\theta_{42}\theta_{06}\theta_{30}\theta_{21}\theta_{55}+\theta_{02}\theta_{25}\theta_{34}\theta_{40}\theta_{67}\theta_{76}\theta_{33}\theta_{05}\theta_{14}\theta_{60}\theta_{42}]^2
\\ &-4\theta_{01}\theta_{02}\theta_{10}\theta_{25}\theta_{34}\theta_{37}\theta_{40}\theta_{43}\theta_{52}\theta_{67}\theta_{75}\theta_{76}\theta_{00}\theta_{04}\theta_{57}\theta_{70}\theta_{61}\theta_{73}\theta_{20}\theta_{07}\theta_{00}\theta_{16}.
\end{aligned}
\end{equation*}
vanishes at the period matrix $\tau$ of a smooth plane quartic iff the bitangent line corresponding to $(i,j)=(7,7)$ is a hyperflex. Here $\theta_{ij}:=\theta_{ij}(\tau,0)$ is the Riemann theta constant with characteristics $(i,j)$.
\end{thm}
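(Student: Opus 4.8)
The plan is to realize $\Omega_{77}$ as a constant multiple of the discriminant of the binary quadratic whose two roots are the contact points of the bitangent $\ell_{77}$, and then to collapse that discriminant into the displayed product of theta constants. Two structural features of the asserted formula guide the whole argument. First, the shape $\Omega_{77}=(A+B)^2-4P$ --- writing $A,B$ for the two products of eleven theta constants inside the bracket and $P$ for the product that is subtracted --- is exactly that of a discriminant $\beta^2-4\alpha\gamma$, so the goal will be to match the middle coefficient $\beta$ with $A+B$ and the product of the outer coefficients $\alpha\gamma$ with $P$. Second, every characteristic occurring in $\Omega_{77}$ is \emph{even} (e.g.\ $(3,7),(6,7),(5,5)$ all have even $\langle\ep,\delta\rangle$), as it must be, since odd theta constants $\theta_m(\tau,0)$ vanish identically; the natural data of a bitangent, however, are the \emph{odd} gradients $\grad\theta_m(\tau,0)$. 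Converting products of odd gradients into products of even theta constants is precisely what the Jacobi derivative formula does, so that formula will be the engine of the proof.

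First I would make the two contact points of $\ell_{77}$ explicit in terms of the $\theta_{ij}$, using the results of \cite{DPFSM}. The line $\ell_{77}$ is $\grad\theta_{77}(\tau,0)\cdot x=0$ in the canonical $\PP^2$, and its two contact points are cut out on it by the quadratic obtained from a syzygetic system of bitangents through $\ell_{77}$ --- for instance, writing the quartic in Riemann's form $(\ell_1\ell_2+\ell_3\ell_4-\ell_5\ell_6)^2-4\ell_1\ell_2\ell_3\ell_4$ for a Steiner hexad with $\ell_5=\ell_{77}$, the restriction to $\ell_{77}$ is the perfect square $(\ell_1\ell_2-\ell_3\ell_4)^2$, so the contact points are the two roots of $g:=\ell_1\ell_2-\ell_3\ell_4\big|_{\ell_{77}}$. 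After fixing a parametrization $t\mapsto x(t)$ of $\ell_{77}\cong\PP^1$, this is a binary quadratic $g(t)=\alpha t^2+\beta t+\gamma$ whose coefficients are explicit polynomials in the components of the odd theta gradients attached to the bitangents of the hexad.

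The geometric reduction is then immediate: $\ell_{77}$ is a hyperflex exactly when its two contact points coincide, i.e.\ when $g$ has a double root, i.e.\ when its discriminant $\beta^2-4\alpha\gamma$ vanishes. Hence the locus in $\A{3}(2)$ on which the bitangent $(7,7)$ is a hyperflex is cut out by $\beta^2-4\alpha\gamma$, and the theorem is equivalent to the theta identity $\beta^2-4\alpha\gamma=c\,\Omega_{77}$ for a nonzero constant $c$. Since both sides are polynomials in the $\theta_{ij}$, it suffices to verify this identity on the dense locus of period matrices of smooth plane quartics.

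The remaining, and hardest, step is that identity. Here I would rewrite the products of odd gradients appearing in $\alpha,\beta,\gamma$ using the genus-three Jacobi (Riemann--Jacobi) derivative formula, which expresses the Jacobian determinant of three odd thetas as a product of even theta constants, together with Riemann's quadratic relations among theta constants. The appearance of a two-term sum $A+B$ for the middle coefficient $\beta$ should reflect the two mixed products that survive this conversion, while the collapse of $\alpha\gamma$ to the single monomial $P$ should come from a Riemann-relation identity. I expect the main obstacle to be entirely in this bookkeeping: one must verify that exactly the even characteristics listed enter --- and, crucially, that the signs, which are governed by the symplectic (Weil) pairing and the syzygy relations among the characteristics attached to $(7,7)$, combine to give $(A+B)^2-4P$ rather than, say, $(A-B)^2$ or a different monomial in place of $P$. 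Once the identity is confirmed, the vanishing of $\Omega_{77}$ is equivalent to the vanishing of the discriminant, hence to $\ell_{77}$ being a hyperflex; and as a polynomial of degree $22$ in even theta constants, $\Omega_{77}$ is automatically a modular form of weight $11$ on $\A{3}(2)$, completing the proof.
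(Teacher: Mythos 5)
Your strategy is in fact the same as the paper's: use \cite{DPFSM} to put the quartic in Riemann's form with respect to a hexad of bitangents containing $\ell_{77}$, observe that the contact points are cut out on $\ell_{77}$ by the conic $\ell_1\ell_2-\ell_3\ell_4$, reduce ``hyperflex'' to the vanishing of the discriminant of the resulting binary quadratic, and convert everything into even theta constants via Jacobi's derivative formula; the paper even packages the discriminant as a parametrization-free polynomial in the coefficients of the five lines (its Lemma 2.6), which is exactly your shape $(A+B)^2-4P$. The genuine gap is at the step you call the remaining identity: the claim that $\beta^2-4\alpha\gamma=c\,\Omega_{77}$ for a nonzero \emph{constant} $c$ is false, so the bookkeeping you defer cannot close as stated. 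The reason is that the hexad lines for which Riemann's form holds identically over moduli are \emph{not} the pure gradient lines $\grad_z\theta_m(\tau,0)\cdot x$: they are these gradients rescaled by explicit products of even theta constants (this rescaling is precisely the content of the matrix of \cite{DPFSM}, whose entries are ratios of Jacobian determinants times gradients). So the coefficients of your quadratic $g$ are not polynomials in gradient components alone, and the theta-constant prefactors survive into the discriminant. Concretely, each line is (three theta constants)$\,\times\,$(gradient), so each $3\times 3$ determinant is nine theta constants times a Jacobian determinant $D$, i.e.\ fourteen theta constants after applying Jacobi's formula; the discriminant therefore has degree $28$ in theta constants, while $\Omega_{77}$ has degree $22$, so the two can only differ by a \emph{nonconstant} factor --- a product of even theta constants (in the paper's computation, the prefactor of the scaled line $\ell_{77}$ itself factors out of every determinant, and $\det Q(\tau,0)$ carries the additional factor $(\theta_{75}\theta_{52}\theta_{43})^4(\theta_{04}^2\theta_{73}\theta_{60})^2$).

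The ingredient missing from your proposal, which is what the paper uses to strip these factors and which is needed for \emph{both} directions of the ``iff'', is the hyperelliptic input: the vanishing locus of any even theta constant is a component of the hyperelliptic locus, so no even theta null vanishes at the period matrix of a smooth plane quartic, and the hyperflex locus is disjoint from the hyperelliptic locus (Vermeulen, Proposition 2.1 of the paper). Writing the true identity as $\Psi=E\cdot\Omega_{77}$ with $E$ the extraneous even-theta factor, one only gets $\Psi(\tau)=0\Leftrightarrow\Omega_{77}(\tau)=0$ at points where $E(\tau)\neq 0$, which is exactly what the hyperelliptic fact guarantees on the locus of smooth plane quartics. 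Two lesser corrections: your discriminant depends on the chosen parametrization of $\ell_{77}$, so to obtain an identity of modular forms (rather than a pointwise condition) you should use a parametrization-independent expression such as the paper's $\Psi$; and no Riemann quadratic relations are needed anywhere --- the two-term bracket and the single subtracted monomial come directly from the discriminant formula applied to the four determinants $\det(\ell_{77},\cdot,\cdot)$, with Jacobi's derivative formula alone accounting for the conversion to the quintuples of even theta constants.
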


Once we know the modular form, computing the class of its zero locus is straightforward, and as a quick corollary we obtain (in Section $3$) a direct alternative proof of Cukierman's formula for $[\overline{\HF}]$. Moreover, by studying the Fourier-Jacobi expansion of the modular form one can explicitly describe the intersection of $\overline{\HF}$ with any boundary stratum of $\CM{3}$. As an example, we show (in Section 4) that the locus of ``banana" curves (two irreducible components intersecting at two nodes) is contained in the closure of the hyperflex locus.

The last section is a separate discussion of the locus of {\em Clebsch quartics}, the plane quartics that admit a polar pentagon. We determine an explicit modular form defining this locus, and apply the same method to confirm its divisor class in $\CM{3}$.

Besides the application to understanding the boundary of the hyperflex locus and the locus of Clebsch quartics, the modular forms are important for their own sake. Before this paper, the only known locus in $\M{3}$ that could be described using a modular form was the hyperelliptic locus. The corresponding modular form is the theta-null modular form --- the product of all even theta constants with characteristics --- which can be defined for arbitrary genus. There are other modular forms in genus higher than $3$ that describe loci of geometric significance, but in general they are rare. For a partial collection one can see \cite{CS12}.

{\small \section*{Acknowledgements}
Firstly I am very grateful to my advisor Samuel Grushevsky for his patience and suggestions throughout our discussions. I want to express my appreciation to the referee for all the valuable suggestions. I also thank David Stapleton for thoroughly proofreading this paper. Finally I am indebted to Riccardo Salvati Manni for his kind advice and encouragement.}

\section{Preliminaries}
\subsection{Theta Characteristics on a Plane Quartic}

We denote the {\em Siegel upper half-space} of dimension g by:
$$
\HH_g:=\{\tau\in \Mat(g\times g,\CC)\ |\ \tau=\tau^t,\Ima(\tau)>0\}.
$$
The moduli space of principally polarized abelian varieties (ppavs) of dimension $g$ $\A{g}=\Gamma_g\setminus\HH_g$ is the quotient of $\HH_g$ by the symplectic group $\Gamma_g:=\Sp(2g,\ZZ)$. We have the Torelli map $u:\M{g}\to\A{g}$, sending a curve to its Jacobian. Since our objects are plane quartics, our discussion will be in the case $g=3$. The Torelli map is dominant in this case, and can be extended to a morphism $\overline{u}:\CM{3}\to\CA{3}$, where $\CM{3}$ is the Deligne-Mumford compactification, and $\CA{3}$ is the perfect cone toroidal compactification, which in genus $3$ is the same as the second Voronoi, and the central cone toroidal compactifications.

For an abelian variety $A_{\tau}$, we denote the set of its two-torsion points by $A_{\tau}[2]\simeq(\ZZ/2\ZZ)^{2g}$, identifying a two-torsion point $m=(\tau\ep+\delta)/2$ with a characteristic $(\ep,\delta)\in(\ZZ/2\ZZ)^{2g}$.

\begin{definition}
The {\em Riemann theta function with characteristics $(\ep,\delta)$} is 
$$
\theta\begin{bsmallmatrix}\epsilon\\\delta\end{bsmallmatrix}(\tau,z):=\sum_{k\in\ZZ^g}\exp\Big[\pi i\big((k+\frac{\epsilon}{2})^t\tau(k+\frac{\epsilon}{2})+2(k+\frac{\epsilon}{2})^t(z+\frac{\delta}{2})\big)\Big].
$$
\end{definition}
When $(\ep,\delta)=(0,0)$, we have the usual Riemann theta function. For a fixed $\tau$, the theta function defines a section of a line bundle on the corresponding abelian variety $A_{\tau}$, which gives a principal polarization. 

 We define $e(m):=(-1)^{\ep\cdot\delta}=\pm 1$ to be the {\em parity} of $m$. The theta function with characteristics $(\ep, \delta)$ is an odd/even function of $z$ when $(\ep,\delta)$ is odd/even. Hence as a function of $\tau$, the {\em theta constant} $\theta\begin{bsmallmatrix}\epsilon\\\delta\end{bsmallmatrix}(\tau,0)$ is identically zero iff $(\ep,\delta)$ is odd, and $\grad_z\theta\begin{bsmallmatrix}\epsilon\\\delta\end{bsmallmatrix}(\tau,z)|_{z=0}$ vanishes identically iff $(\ep,\delta)$ is even.

In genus 3, the canonical image of a non-hyperelliptic curve is a plane quartic, and the bitangent lines to the plane quartic are given by the gradients of the theta functions with odd characteristics (see \cite[ch.~5]{Do12}).

\subsection{Modular Forms and the Level Covers of $\A{g}$}

\begin{definition}
Let $W$ be a finite dimensional complex vector space. Given an arithmetic subgroup $\Gamma\subset\Gamma_g$ and a representation $\rho:GL(g,\CC)\rightarrow GL(W)$, a holomorphic function $f:\HH_g\rightarrow W$ is called a {\em $\rho$-valued Siegel modular form} w.r.t $\Gamma$ if
$$
f(\gamma\circ\tau)=\rho(C\tau+D)\circ f(\tau)
$$
for any $\gamma=\begin{psmallmatrix}A&B\\C&D\end{psmallmatrix}\in\Gamma$, and any $\tau\in\HH_g$. For $g=1$ we also require $f$ to be regular at the cusps of $\Gamma\setminus\HH_1$.\end{definition}

If $W=\CC$, and $\rho(\gamma)=\det(C\tau+D)^k$, then the modular form is called a {\em weight $k$} (scalar) modular form for $\Gamma$.  We recall \cite{Igu72} the following transformation formula for theta functions with characteristics:
$$
\theta\begin{bsmallmatrix}\epsilon\\\delta\end{bsmallmatrix}(\gamma\tau,(C\tau+D)^{-1}z)=\phi\cdot\det(C\tau+D)^{1/2}\theta[\gamma\circ\begin{psmallmatrix}\epsilon\\\delta\end{psmallmatrix}](\tau,z)
$$
for any $\gamma=\begin{psmallmatrix}A&B\\C&D\end{psmallmatrix}\in\Gamma_g$ acting on the characteristics $(\ep,\delta)$ in the following way:
\begin{equation}\label{3}
\gamma\circ\begin{bmatrix}\epsilon\\\delta\end{bmatrix}=\begin{bmatrix}D&-C\\-B&A\end{bmatrix}\begin{bmatrix}\epsilon\\\delta\end{bmatrix}+\begin{bmatrix}\diag(C^tD)\\ \diag(A^tB)\end{bmatrix}.\end{equation}
In our case $\gamma\in\Gamma_g(4,8)\subset\Gamma_g$ (we will define it below), $\phi\equiv 1$ so we do not define $\phi$ in general. By differentiating with respect to $z_i$ we obtain:
$$
\frac{\partial}{\partial z_i}\theta\begin{bsmallmatrix}\epsilon\\\delta\end{bsmallmatrix}(\gamma\tau,(C\tau+D)^{-1}z)=\det(C\tau+D)^{1/2}\sum_{j}(C\tau+D)_{ij}\frac{\partial}{\partial z_j}\theta[\gamma\circ\begin{psmallmatrix}\epsilon\\\delta\end{psmallmatrix}](\tau,z)
$$
for any $\gamma\in\Gamma_g(4,8)$.

This is to say that the theta constants with characteristics are modular forms of weight $\frac{1}{2}$, and the gradients of the theta functions with characteristics evaluated at $z=0$ (see \cite{SM83}) are vector-valued modular forms for the representation $\rho=\det^{\frac{1}{2}}\otimes std$ with respect to a level subgroup $\Gamma_g(4,8)\subset\Gamma_g$, which is defined in general as follows:
$$
\Gamma_g(k):=\{\gamma\in\Gamma_g\ |\ \gamma\equiv\textbf{1}_{2g}\mod k\},
$$
$$
\Gamma_g(k,2k):=\{\gamma\in\Gamma_g(k)\ |\ \diag(C^tD)\equiv \diag(A^tB)\equiv 0\mod 2k\}.
$$
We will call the quotient $\A{g}(k):=\Gamma_g(k)\setminus\HH_g$, (resp. $\A{g}(k,2k):=\Gamma_g(k,2k)\setminus\HH_g$) the {\em moduli space of ppavs with a level $k$ structure} (resp. a level $(k,2k)$ structure). This cover of $\A{g}$ is Galois when $k$ is even.

\subsection{The Boundary of the Level Cover}

Recall that $\Pic_{\QQ}(\CA{3})=\QQ L\oplus\QQ D$, where $L$ is the first Chern class of the Hodge vector bundle $\mathbb{E}$ on $\A{3}$, and $D$ is the class of the boundary divisor (See \cite{HS02}). We further recall (see \cite{ACGII}) that $\Pic_{\QQ}(\CM{3})=\QQ\lambda\oplus\QQ\delta_0\oplus\QQ\delta_1$. Here $\lambda$ is the pullback of the class $L$ under the Torelli map, $\delta_0:=\bar{u}^*D$ is the class of the boundary component $\Delta_0$, the closure of the locus of irreducible curves with one node, and $\delta_1$ is the class of $\Delta_1$, the closure of the locus of nodal curves of compact type. The Torelli map contracts $\Delta_1$ onto the locus $P:=\A{1}\times\A{2}\subset\A{3}$.

By definition $\A{3}(2)$ is the moduli of ppavs together with a chosen symplectic basis for the group of two torsion points. There is a level toroidal compactification $\CA{3}(2)$ and the level map $\A{3}(2)\to\A{3}$ extends to a map $p:\CA{3}(2)\to\CA{3}$ of compactifications. The preimage $p^{-1}D$ is reducible, and its irreducible components are indexed by non-zero characteristics: $p^{-1}D=\cup_{n\in (\ZZ/2\ZZ)^{6}-0} D_n$. This enumeration of the components of $\Delta_0$ is also discussed in \cite{GrH12}.

The preimage $p^{-1}P$ is also reducible, and we now identify its irreducible components. For a generic point $[A]\in P$ we have $A=E\times A'$. The group of two-torsion points of $A$ splits as $A[2]\simeq(\ZZ/2\ZZ)^2\oplus(\ZZ/2\ZZ)^4$. Choosing such an isomorphism is the same as choosing a 2 dimensional symplectic subspace $V\subset(\ZZ/2\ZZ)^6$. Hence the irreducible components of $p^{-1}P$ are labeled by the choice of such subspaces, and we denote them by $P_V$.

Throughout the paper we will use the following fibre product diagram:
\begin{equation}\label{n}
\begin{CD}
\CM{3}(2) @>\bar{u}'>> \CA{3}(2)\\
@VVp'V @VVpV\\
\CM{3} @>\bar{u}>> \CA{3}
\end{CD}
\end{equation}
In this diagram $\CM{3}(2)$ is the pullback of $\CA{3}(2)$ through $\bar{u}$, which parametrizes stable genus three curves with a chosen symplectic basis for the group of two torsion points in their Jacobians. The maps $p$ and $p'$ are forgetful, while the maps $\bar{u}$ and $\bar{u}'$ are extended Torelli maps. This diagram will be used in the computation of the class in $\CM{3}$ defined by the pullback of the modular form, which is computed on $\CA{3}(2)$.

\section{The Hyperflex Locus}

The {\em hyperflex locus} $\HF$ is defined to be the subset of $\M{3}$ consisting of plane quartics with at least one hyperflex point. It can be shown that $\HF$ is an irreducible divisor:

\begin{prop}[{\cite[Ch.~1,~Prop.~4.9]{Ver83}}]\label{j}
$\HF$ is an irreducible, five-dimensional subvariety of $\M{3}$, and it is closed in $\M{3}-H_3$ where $H_3$ is the hyperelliptic locus.
\end{prop}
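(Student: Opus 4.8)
The plan is to realize $\HF$ as the image of an explicit incidence correspondence and to read off all three assertions from the geometry of that correspondence. Since a non-hyperelliptic genus three curve is canonically a smooth plane quartic, unique up to $PGL_3$, we have $\M{3}-H_3\cong U/PGL_3$, where $U\subset\PP^{14}=\PP(H^0(\PP^2,\mathcal{O}(4)))$ is the open locus of smooth quartics. A line $\ell$ is a hyperflex line of $C=\{F=0\}$ at a point $p\in\ell$ precisely when the binary quartic $F|_\ell$ vanishes to order four at $p$, i.e. $F|_\ell$ is a fourth power of the linear form cutting out $p$ on $\ell$. Accordingly I would introduce the flag variety $\Phi:=\{(p,\ell)\in\PP^2\times(\PP^2)^\vee\mid p\in\ell\}$, an irreducible smooth projective threefold (a $\PP^1$-bundle over $\PP^2$), and the incidence variety
$$
I:=\{(C,p,\ell)\in\PP^{14}\times\Phi\mid F|_\ell\text{ vanishes to order }\geq 4\text{ at }p\}.
$$

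First I would show that $I$ is irreducible of dimension $13$. For fixed $(p,\ell)$, choosing coordinates with $\ell=\{z=0\}$ and $p=[1:0:0]$, the restriction $F|_\ell$ is the binary quartic $\sum_{i=0}^{4} a_i x^{4-i}y^i$, and vanishing to order four at $p=[1:0]$ is the vanishing of $a_0,a_1,a_2,a_3$ --- four linear conditions on the fifteen coefficients of $F$ that are manifestly independent (monomials realize arbitrary $(a_0,\dots,a_4)$). As $(p,\ell)$ vary these conditions have constant rank four, so $I\to\Phi$ exhibits $I$ as the projectivization of a rank-$11$ subbundle of the trivial rank-$15$ bundle over $\Phi$, i.e. a $\PP^{10}$-bundle over the irreducible base $\Phi$; hence $I$ is irreducible of dimension $3+10=13$.

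Next I would analyze the projection $\pi\colon I\to\PP^{14}$. As $\PP^{14}\times\Phi$ is projective, $\pi$ is proper, so $Z:=\pi(I)$ is closed and irreducible. To obtain $\dim Z=13$ (equivalently, that $\pi$ is generically finite) it suffices, by upper semicontinuity of fibre dimension on the irreducible $I$, to exhibit one point $(C,p,\ell)\in I$ with $C$ smooth: the fibre of $\pi$ over a smooth $C$ is in bijection with its hyperflexes, which are finite in number since they form a subset of the flexes, and the flexes are cut out on $C$ by its degree-six Hessian curve. Producing a single explicit smooth hyperflex quartic therefore does double duty: it shows $Z\cap U\neq\emptyset$, so that by irreducibility of $Z$ the smooth hyperflex quartics are Zariski dense in $Z$, and it shows $\pi$ is generically finite. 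Thus $Z\cap U$ is a $PGL_3$-invariant irreducible subset of dimension $13$, closed in $U$. Passing to the geometric quotient by the proper $PGL_3$-action ($\dim PGL_3=8$) yields that $\HF=(Z\cap U)/PGL_3$ is irreducible of dimension $13-8=5$.

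Finally, closedness in $\M{3}-H_3$ is inherited from properness: $Z$ is closed in $\PP^{14}$, hence $Z\cap U$ is closed in $U$, and since smooth plane quartics are GIT-stable, $PGL_3$ acts properly on $U$ with good geometric quotient, so a closed invariant subset descends to a closed subset of $U/PGL_3=\M{3}-H_3$. Concretely this says a limit of smooth hyperflex quartics that remains smooth is again a hyperflex quartic (the tangency datum converges by properness of $\Phi$), which is exactly why the statement cannot be extended across $H_3$, where the plane-quartic model degenerates and the hyperflex condition loses meaning. I expect the only genuinely delicate point to be the generic finiteness of $\pi$, i.e. that a general member of $Z$ carries only finitely many (generically one) hyperflexes; the irreducibility and the dimension then follow formally from the projective-bundle structure of $I$, and closedness is a soft consequence of properness together with the properness of the $PGL_3$-action on the stable locus.
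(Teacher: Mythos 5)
The paper itself gives no argument for this statement: it is quoted directly from Vermeulen's thesis \cite[Ch.~1, Prop.~4.9]{Ver83}, so there is no internal proof to compare against and your proposal has to stand on its own. It essentially does, and by what is the standard route: the fibrewise computation showing $I$ is a $\PP^{10}$-bundle over the flag threefold (hence irreducible of dimension $13$), properness of $\pi$ so that $Z=\pi(I)$ is closed and irreducible, upper semicontinuity of fibre dimension to promote one finite fibre to generic finiteness, finiteness of hyperflexes on a smooth quartic via the Hessian, the identification $\M{3}-H_3\cong U/PGL_3$, finiteness of stabilizers (automorphism groups of smooth genus three curves are finite, so all orbits in $U$ have dimension $8$) to get $\dim\HF=13-8=5$, and the GIT fact that a good (here geometric) quotient carries closed invariant sets to closed sets. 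All of these steps are sound as stated.

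The one step you flag but never actually carry out is the one on which the whole dimension count hinges: exhibiting a single smooth quartic with a hyperflex. Without it you only get $\dim Z\leq 13$, and --- worse --- you do not know that $Z\cap U\neq\emptyset$, which you also need in order to conclude that $Z\cap U$ is a nonempty open (hence dense, irreducible, $13$-dimensional) subset of $Z$ and that $\HF$ is nonempty at all. The gap is easy to fill: the Fermat quartic $F=x^4+y^4+z^4$ is smooth, and at each point $P=[1:\zeta:0]$ with $\zeta^4=-1$ the tangent line is $x+\zeta^3y=0$; substituting $x=-\zeta^3y$ and using $\zeta^{12}=-1$ gives $F|_{\ell}=z^4$, which vanishes to order exactly $4$ at $P$, so $P$ is a hyperflex. (Alternatively, $x^3y+y^4+z^4=0$ is smooth and has the hyperflex line $y=0$ at $[1:0:0]$.) With such an example inserted where you invoke it, the proof is complete.
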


We denote the closure of $u(\HF)$ in $\A{3}$ by $HF$. We define $HF_m\subset\A{3}(2)$ to be the set of ppavs $(J(C),i)$ where the bitangent line corresponding to the odd characteristic $m$ under the basis defined by $i:J(C)[2]\simeq(\ZZ/2\ZZ)^6$ is a hyperflex line to $C$.

To determine the scalar modular form w.r.t.~$\Gamma_3(2)$, whose zero locus is equal to $HF_{77}$, we need to know the equation of a plane quartic in terms of its bitangents. Such a formula was known classically for an individual curve (see \cite[Ch.~5]{Do12}). Only recently Dalla Piazza, Fiorentino and Salvati Manni obtained such an expression globally over the moduli space of genus $3$ curves \cite{DPFSM}. They derived an eight by eight symmetric matrix parametrizing the bitangents of a given plane quartic, such that the determinant of any four by four minor of the matrix gives the equation of the quartic. We recall their notation and results.

\begin{definition}\label{e}
\begin{enumerate}
\item A triple of characteristics $m_1,m_2,m_3$ is called {\em azygetic} (resp. {\em syzygetic}) if
$$
e(m_1,m_2,m_3)=e(m_1)e(m_2)e(m_3)e(m_1+m_2+m_3)=-1\ \text{(resp. }1).
$$
\item A $(2g+2)$-tuple of characteristics is called a {\em fundamental system} if any subset of three elements is azygetic.
\end{enumerate}
\end{definition}

For a more detailed discussion see \cite[Chp.~5]{Do12}. In our case $g=3$, any fundamental system consists of 8 characteristics, within which either $3$ or $7$ elements are odd.

We define
$$
b_{ij}:=\grad_z\theta\begin{bsmallmatrix}\epsilon\\\delta\end{bsmallmatrix}(\tau,z)|_{z=0},
$$
where $i=4\epsilon_1+2\epsilon_2+\epsilon_3, j=4\delta_1+2\delta_2+\delta_3$, and denote the Jacobian determinant by:
$$
D(n_1,n_2,n_3):=b_{n_1}\wedge b_{n_2}\wedge b_{n_3}.
$$
It is known that $D$ is a scalar modular form of weight $\frac{5}{2}$ that can be written in terms of theta constants using Jacobi's derivative formula:

\begin{prop}[\cite{Igu81}]{\label{b}}
Let $n_1, n_2, n_3$ be an azygetic triple of odd theta characteristics, then there exists a unique quintuple of even theta characteristics $m_1$, $m_2$, $m_3$, $m_4$, $m_5$ such that the 8-tuple forms a fundamental system. We have
$$
D(n_1,n_2,n_3)=\pm\pi^3\cdot\theta_{m_1}\theta_{m_2}\theta_{m_3}\theta_{m_4}\theta_{m_5}.
$$
\end{prop}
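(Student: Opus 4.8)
The plan is to read both sides of the claimed identity as holomorphic modular forms of weight $\tfrac52$ for the level group $\Gamma_3(4,8)$ and to prove that their quotient is a nonzero constant. By the transformation formulas recalled above, on $\Gamma_3(4,8)$ (where $\phi\equiv1$ and the characteristics are preserved) each even theta constant $\theta_{m_i}$ is a weight-$\tfrac12$ modular form with trivial character, so $\pi^3\theta_{m_1}\cdots\theta_{m_5}$ has weight $\tfrac52$; likewise each gradient $b_{n_k}=\grad_z\theta[n_k](\tau,z)|_{z=0}$ transforms by $\det(C\tau+D)^{1/2}\otimes std$, so the wedge $D(n_1,n_2,n_3)=b_{n_1}\wedge b_{n_2}\wedge b_{n_3}$ is a section of $\det^{5/2}$ with the same trivial character. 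Hence the quotient $D(n_1,n_2,n_3)/(\theta_{m_1}\cdots\theta_{m_5})$ is a weight-$0$, $\Gamma_3(4,8)$-invariant meromorphic function on $\HH_3$, and it suffices to show it extends to a nonzero holomorphic function and to evaluate it.

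First I would settle the combinatorial claim that the azygetic odd triple extends to a unique fundamental system whose five remaining members are even. Since the $n_k$ are odd and azygetic, $e(n_1+n_2+n_3)=-e(n_1)e(n_2)e(n_3)=+1$, so $n_1+n_2+n_3$ is even; more generally the parity function $e$ together with the classification of genus-$3$ fundamental systems into the $3$-odd and $7$-odd types forces any completion of an azygetic odd triple to be of the $3$-odd type, so that exactly five even characteristics are added. Uniqueness of this quintuple would follow from the transitive action of $\Sp(6,\ZZ/2\ZZ)$ on fundamental systems and a stabilizer count, and along the way I would record explicitly which five even characteristics occur.

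The crux—and the step I expect to be the main obstacle—is the holomorphicity of the quotient, i.e.\ that $D(n_1,n_2,n_3)$ vanishes along each theta-null divisor $\{\theta_{m_i}=0\}$ to at least the order of $\theta_{m_i}$. The mechanism is that at a point where an even theta-null vanishes, $\theta[m_i](\tau,z)$ has a double zero at $z=0$, and the Riemann (Frobenius) quadratic relations among the theta functions of the fundamental system $\{n_1,n_2,n_3,m_1,\dots,m_5\}$, differentiated at $z=0$, force a linear dependence among $b_{n_1},b_{n_2},b_{n_3}$; hence $D$ vanishes there as well. Granting this, the quotient is a holomorphic $\Gamma_3(4,8)$-invariant function on $\HH_3$, bounded near the boundary by the Koecher principle, so it extends to a global holomorphic function on the projective toroidal compactification and is therefore constant.

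It remains to determine this constant $c$, for which I would degenerate to lower genus; note the full diagonal $\HH_1^3$ is useless, since for an azygetic triple both sides vanish identically there. Instead, working on $\HH_1\times\HH_2\hookrightarrow\HH_3$ with a block-adapted azygetic triple (one member with odd genus-one and even genus-two part, the other two with even genus-one and odd genus-two part), every theta constant factors and the gradient matrix becomes block triangular, so $D(n_1,n_2,n_3)$ factors into a genus-one Jacobi derivative, some theta constants, and the genus-two Jacobian nullwert of the remaining odd pair. The five even $m_i$ likewise restrict to five genus-one and five genus-two even theta constants, so the factor counts match on both sides; feeding in the classical genus-one Jacobi formula $\theta_{11}'(0)=-\pi\,\theta_{00}\theta_{01}\theta_{10}$ and the genus-two case (itself obtained the same way) gives, by induction on the genus, $c=\pm\pi^3$—with the sign absorbing the orderings and the choice of branch of $\det^{1/2}$. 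This yields $D(n_1,n_2,n_3)=\pm\pi^3\,\theta_{m_1}\cdots\theta_{m_5}$, as claimed.
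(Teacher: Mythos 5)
You should note at the outset that the paper itself contains no proof of this proposition: it is Igusa's theorem, quoted verbatim with the citation [Igu81], so there is nothing in the paper to compare your argument against, and I assess it on its own terms. Your skeleton is a legitimate, classical route: both sides are scalar modular forms of weight $\tfrac52$ for $\Gamma_3(4,8)$, so the quotient is an invariant weight-$0$ meromorphic function; a holomorphic invariant function is constant by the Koecher principle; and the constant can be evaluated by restriction to $\HH_1\times\HH_2$ together with induction on the genus (you are also right, and it is a good sign you noticed, that the full diagonal $\HH_1^3$ is useless for an azygetic triple, while on $\HH_1\times\HH_2$ the gradient matrix becomes block-triangular and the restriction factors into a genus-one Jacobi derivative times a genus-two Jacobian nullwert). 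The combinatorial step (unique completion by five even characteristics) is standard and your sketch of it is fine.

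The genuine gap is exactly the step you yourself call the crux. You claim $D(n_1,n_2,n_3)$ vanishes along each $\{\theta_{m_i}=0\}$ because ``Riemann quadratic relations, differentiated at $z=0$, force a linear dependence among $b_{n_1},b_{n_2},b_{n_3}$,'' but you never exhibit the relations, and there is a structural reason no argument of that generic shape can be right: the even characteristic $s:=n_1+n_2+n_3$ is \emph{also} azygetic with every pair $n_i,n_j$, yet $s$ is not among $m_1,\dots,m_5$ (it is excluded precisely because $n_1+n_2+n_3+s=0$ violates essential independence), and $D$ does \emph{not} vanish generically on $\{\theta_s=0\}$. Hence any mechanism producing the dependence must genuinely use fundamental-system membership --- a linear condition, not merely the syzygetic data --- and must distinguish $\{\theta_{m_i}=0\}$ from $\{\theta_s=0\}$; a differentiation of quadratic identities that only sees parities and azygeticity would ``prove'' the false vanishing on $\{\theta_s=0\}$ as well. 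The standard way to close the hole is geometric: a generic point of each component $\{\theta_m=0\}$ is a hyperelliptic Jacobian, where the $28$ gradients $b_n$ are, via the Gauss map, the chords of the canonical conic through pairs of the $8$ branch points, the vanishing even null is labelled by $\emptyset$ and the odd characteristics by pairs $T$; one checks that $m$ lies in the completion of the triple iff the three pairs $T_1,T_2,T_3$ share a common branch point (equivalently $T_1\Delta T_2\Delta T_3$ is a $4$-set, whereas on $\{\theta_s=0\}$ the three pairs form a triangle), so the three chords are concurrent, the gradients are dependent, and $D=0$; vanishing to order $\geq 1$ then suffices because each theta-null divisor is reduced. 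A smaller unproven point of the same nature sits in your evaluation of the constant: you need all five $m_i$ to have both blocks even (if one had both blocks odd, the right-hand side would vanish identically on $\HH_1\times\HH_2$ and determine nothing); this is true for a suitable block-adapted triple --- the unique bad candidate is again $s$ --- but it is a combinatorial fact to be verified, not a consequence of your factor count.
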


The following proposition is the result of Dalla Piazza, Fiorentino, Salvati Manni.
\begin{prop}[{\cite[Cor.~6.3]{DPFSM}}]\label{k}
Let $\tau$ be the period matrix of the Jacobian of a plane quartic $C$, then the equation of the image of $C$ under canonical embedding is given by the determinant of the following symmetric matrix:
$$
Q(\tau,z):=\begin{pmatrix}
0&\frac{D(31,13,26)}{D(77,31,26)}b_{77}&\frac{D(22,13,35)}{D(77,31,26)}b_{64}&\frac{D(77,64,46)}{D(77,31,26)}b_{51}\\
*&0&\frac{D(22,13,35)}{D(77,46,51)}b_{13}&\frac{D(77,13,31)}{D(77,31,26)}b_{26}\\
*&*&0&\frac{D(64,13,22)}{D(77,31,26)}b_{35}\\
*&*&*&0\\
\end{pmatrix}.
$$
\end{prop}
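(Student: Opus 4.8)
I read Proposition \ref{k} as the global, theta-theoretic incarnation of the classical representation of a plane quartic by the bitangents of a Steiner complex, and my plan is to reduce it to the correct determination of the scalar coefficients.

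\emph{The determinant identity.} First I would record what $\det Q$ actually is. For a symmetric $4\times4$ matrix with zero diagonal and off-diagonal entries $a=Q_{12},b=Q_{13},c=Q_{14},d=Q_{23},e=Q_{24},f=Q_{34}$, a direct expansion gives $\det=X^2+Y^2+Z^2-2XY-2YZ-2ZX$, where $X=Q_{12}Q_{34}$, $Y=Q_{13}Q_{24}$, $Z=Q_{14}Q_{23}$ are the products of opposite entries. This factors as $\det Q=-(\sqrt X+\sqrt Y+\sqrt Z)(-\sqrt X+\sqrt Y+\sqrt Z)(\sqrt X-\sqrt Y+\sqrt Z)(\sqrt X+\sqrt Y-\sqrt Z)$, so that $\{\det Q=0\}$ is exactly Riemann's equation $\sqrt{Q_{12}Q_{34}}\pm\sqrt{Q_{13}Q_{24}}\pm\sqrt{Q_{14}Q_{23}}=0$ built from the three pairs of bitangents $(b_{77},b_{35})$, $(b_{64},b_{26})$, $(b_{51},b_{13})$.

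\emph{The combinatorial input.} Next I would check that Riemann's theorem applies, i.e.\ that these three pairs lie in a common Steiner complex. In the encoding $i=4\ep_1+2\ep_2+\ep_3$ each of the six indices is odd, and a one-line computation shows that every pair sums to the same even nonzero characteristic $(4,2)$; hence the six bitangents form a syzygetic triad of pairs in the Steiner complex of $(4,2)$. By the classical theory of Steiner complexes (see \cite[Ch.~6]{Do12}) the quartic $C$ admits a representation of exactly the shape produced by $\det Q$, for suitably scaled representatives of the six bitangent forms; so once the scalings are correct, $\det Q$ equals $f_C$ up to a global constant.

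\emph{The coefficients --- the main obstacle.} Since the six entry-lines restrict $\det Q$ to a perfect square automatically (setting $a=0$ yields $(be-cd)^2$, and similarly for the others), they are bitangent to $\{\det Q=0\}$ for \emph{any} choice of scalars; the content is therefore to fix the ratios $\sqrt X:\sqrt Y:\sqrt Z$ so that $\{\det Q=0\}$ is the actual quartic $C$ rather than another quartic sharing these six bitangents. I would determine them by imposing that further bitangents of $C$ --- say the remaining pairs of the Steiner complex of $(4,2)$ --- are also bitangent to $\{\det Q=0\}$, and translate this into the quadratic Riemann relations among the gradients $b_m$. Here each $D(n_1,n_2,n_3)$ appearing in $Q$ is the Jacobian of an azygetic triple (Definition \ref{e}), so Proposition \ref{b} rewrites it as $\pm\pi^3$ times the product of the five even theta constants completing the triple to a fundamental system; matching these products for the systems attached to $(77,31,26)$, $(31,13,26)$, $(22,13,35)$, $(77,64,46)$, $(77,46,51)$, $(77,13,31)$ and $(64,13,22)$ is what pins the ratios, and the overall constant is forced by modular weight (each $b_m$ has weight $\tfrac12$ and each $D$ weight $\tfrac52$ on $\A{3}(2)$, so $\det Q$ has weight $2$) together with evaluation at one explicit period matrix. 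This theta-transformation bookkeeping over $\Gamma_3(2)$, and the parallel check that the determinant is independent of which $4\times4$ minor of the underlying $8\times8$ bitangent matrix is used, is the technical heart carried out in \cite{DPFSM}.
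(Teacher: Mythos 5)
The first thing to note is that the paper contains no proof of Proposition~\ref{k} at all: it is imported verbatim as \cite[Cor.~6.3]{DPFSM} and used as an external input to the proof of Theorem~\ref{a}. So the comparison here is really between your sketch and the argument in that reference. The structural parts of your sketch are correct and verifiable: the determinant of a symmetric $4\times4$ matrix with zero diagonal is indeed $X^2+Y^2+Z^2-2XY-2YZ-2ZX$ in the products $X,Y,Z$ of opposite entries (this is exactly the shape the paper exploits in Equation~(\ref{p})); the factorization into Riemann's form $\sqrt{X}\pm\sqrt{Y}\pm\sqrt{Z}=0$ is right; and the three opposite-entry pairs $(77,35)$, $(64,26)$, $(51,13)$ do all sum to the even characteristic $(4,2)$, so the six odd characteristics are three pairs of a single Steiner complex.

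Judged as a proof, however, your attempt has a genuine gap --- one you candidly flag yourself. Everything you actually establish (that the six lines are bitangent to $\{\det Q=0\}$ for \emph{arbitrary} scalars, and that $C$ admits \emph{some} representation of this shape) holds for the whole two-parameter family of quartics obtained by varying the ratios $X:Y:Z$, and therefore cannot single out $C$. The entire mathematical content of the proposition is that the \emph{specific} ratios of Jacobian determinants $D(\cdot,\cdot,\cdot)$ are the scalings producing $C$ itself, and that is precisely the step you defer to \cite{DPFSM} rather than prove. Moreover, your proposed mechanism for pinning the ratios --- imposing bitangency of the remaining pairs of the Steiner complex of $(4,2)$ --- is not justified as sufficient: the classical statement that bitangents determine a plane quartic requires an Aronhold system of seven pairwise-azygetic bitangents, which a single Steiner complex does not contain, and in any case one would still have to verify that the stated $D$-ratios satisfy the resulting conditions, which is exactly the nontrivial theta-identity work that constitutes the proof in \cite{DPFSM}. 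Since the paper itself outsources the same step to the same reference, your treatment is on equal footing with the paper's only if it is read, like the paper, as a citation; as a standalone argument, its decisive step is neither executed nor reduced to anything more elementary than the proposition being proved.
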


Note that $\{b_{ij}\}$ are linear expressions in the coordinates of $\mathbb{P}H^0(C,K_C)^\vee$, and the determinant is a quartic polynomial in the $b_{ij}$, with coefficients being rational functions in Jacobian determinants. Using this we derive the modular form $\Omega_{77}$:
\begin{thm}{\label{a}}
Let $\Omega_{77}$ be the following modular form with respect to $\Gamma_3(2)$:
\begin{multline}\label{4}
\Omega_{77}:= \big[\theta_{01}\theta_{10}\theta_{37}\theta_{43}\theta_{52}\theta_{75}\cdot D(77,64,13)+\theta_{02}\theta_{25}\theta_{34}\theta_{40}\theta_{67}\theta_{76}\cdot D(77,51,26)\big]^2
\\-4\theta_{01}\theta_{02}\theta_{10}\theta_{25}\theta_{34}\theta_{37}\theta_{40}\theta_{43}\theta_{52}\theta_{67}\theta_{75}\theta_{76}\cdot D(77,64,51)\cdot D(77,13,26).
\end{multline}
Then its zero locus in $\A{3}(2)$ is equal to $HF_{77}$.
\end{thm}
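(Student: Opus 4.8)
The plan is to restrict the Dalla Piazza--Fiorentino--Salvati Manni quartic equation $\det Q(\tau,z)=0$ of Proposition \ref{k} to the bitangent line $\{b_{77}=0\}$ and read off the hyperflex condition directly. Writing the four-by-four symmetric matrix $Q$ with vanishing diagonal as $\begin{psmallmatrix}0&a&b&c\\a&0&d&e\\b&d&0&f\\c&e&f&0\end{psmallmatrix}$, a direct expansion gives $\det Q=(af-be-cd)^2-4(be)(cd)$. The $(1,2)$-entry $a$ is a nonzero multiple of $b_{77}$, so $af$ vanishes on the line $\ell:=\{b_{77}=0\}$, and therefore $\det Q|_{\ell}=(be-cd)^2$ is the square of the binary quadratic $q:=be-cd$ on $\ell\cong\PP^1$, whose two roots are the contact points of the bitangent. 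The line is a hyperflex exactly when these contact points collide, i.e.\ when $q$ acquires a double root. Hence $HF_{77}$ is cut out by the vanishing of the discriminant of $q$, and the problem is reduced to identifying $\mathrm{disc}(q)$ with $\Omega_{77}$.

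Next I would express $\mathrm{disc}(q)$ in terms of the Jacobian determinants $D$. Reading the entries of $Q$ off Proposition \ref{k}, up to the common scalar $D(22,13,35)/D(77,31,26)$ one has $q=\alpha\,b_{64}b_{26}-\beta\,b_{51}b_{13}$ with $\alpha=D(77,13,31)/D(77,31,26)$ and $\beta=D(77,64,46)/D(77,46,51)$. The key geometric input is that the $2\times2$ Wronskian of two linear forms $b_i,b_j$ restricted to $\ell$ is a fixed scalar multiple, depending only on $b_{77}$ and the chosen parametrisation of $\ell$, of $D(77,i,j)=b_{77}\wedge b_i\wedge b_j$. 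Setting $D_1:=D(77,64,13)$, $D_2:=D(77,51,26)$, $D_3:=D(77,64,51)$, $D_4:=D(77,13,26)$, $E:=D(77,64,26)$ and $F:=D(77,51,13)$, a short computation of the discriminant of a difference of two split binary quadratics then yields
\[
\mathrm{disc}(q)\ \propto\ \alpha^2E^2+\beta^2F^2+2\alpha\beta\big(D_1D_2+D_3D_4\big),
\]
while the three products of complementary pairs of contact points satisfy the Pl\"ucker relation $EF=\pm(D_3D_4-D_1D_2)$.

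To finish I would use this Pl\"ucker relation to write $\alpha^2E^2+\beta^2F^2=(\alpha E-\beta F)^2\pm2\alpha\beta(D_3D_4-D_1D_2)$, so that after completing the square the discriminant collapses to $(\alpha E-\beta F)^2+4\alpha\beta D_3D_4$ up to sign. Converting every Jacobian determinant into a product of five even theta constants by Jacobi's derivative formula (Proposition \ref{b}) and clearing the denominators $D(77,31,26)$, $D(77,46,51)$ hidden in $\alpha,\beta$, this expression reduces to $\big(A\,D_1+B\,D_2\big)^2-4AB\,D_3D_4$ up to a theta-constant factor that is nonvanishing on the smooth locus, where $A=\theta_{01}\theta_{10}\theta_{37}\theta_{43}\theta_{52}\theta_{75}$ and $B=\theta_{02}\theta_{25}\theta_{34}\theta_{40}\theta_{67}\theta_{76}$ are exactly the coefficients occurring in \eqref{4}. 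This is precisely $\Omega_{77}$, so $\mathrm{disc}(q)$ and $\Omega_{77}$ have the same zero locus among Jacobians of smooth plane quartics; passing to closures in $\A{3}(2)$ then gives $\{\Omega_{77}=0\}=HF_{77}$.

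The main obstacle is the last step, matching theta monomials and signs. Completing the square forces the two identities $\alpha\beta\propto -AB$ and $\alpha E-\beta F\propto A\,D_1+B\,D_2$ (after clearing $D(77,31,26)D(77,46,51)$), and verifying these requires selecting, for each azygetic triple, the correct quintuple of even characteristics that completes it to a fundamental system, tracking the sign ambiguities in Jacobi's formula of Proposition \ref{b}, and invoking the classical quadratic relations among products of even theta constants. Keeping this bookkeeping consistent, rather than any conceptual difficulty, is where the real work lies.
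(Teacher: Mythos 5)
Your reduction is the same one the paper makes: restrict the Dalla Piazza--Fiorentino--Salvati Manni quartic to $\ell=\{b_{77}=0\}$, observe that the restriction is the square of the binary quadratic $q=\alpha\,b_{64}b_{26}-\beta\,b_{51}b_{13}$ whose roots are the contact points, and convert restricted Wronskians into the determinants $D(77,i,j)$. The gap is in your final step. Carried out with your own sign conventions, the bracket computation gives
\begin{equation*}
\mathrm{disc}(q)\ \propto\ \alpha^2E^2+\beta^2F^2-2\alpha\beta\,(D_1D_2+D_3D_4)
\;=\;(\alpha E-\beta F)^2-4\alpha\beta\,D_3D_4
\;=\;(\alpha E+\beta F)^2-4\alpha\beta\,D_1D_2,
\end{equation*}
the last two equalities using the Pl\"ucker relation $EF=D_1D_2-D_3D_4$ (your ``$+2\alpha\beta(D_1D_2+D_3D_4)$'' is a sign slip, but not the main issue). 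The essential point is that \emph{every} expression reachable from the Pl\"ucker relation keeps $E=D(77,64,26)$ and $F=D(77,51,13)$ --- the pairing $\{64,26\},\{51,13\}$ forced by the matrix $Q$ --- inside the square, whereas $\Omega_{77}$ in \eqref{4} has the cross pairing $D_1=D(77,64,13)$, $D_2=D(77,51,26)$ inside the square. Bridging the two is exactly your pair of identities $\alpha E-\beta F\propto AD_1+BD_2$ and $\alpha\beta\propto\pm AB$. These are \emph{not} bookkeeping: they do not follow from the Pl\"ucker relation, and if $b_{64},b_{26},b_{51},b_{13}$ are replaced by four general lines they are false (the discriminant of a quadratic paired one way is not proportional to the discriminant paired the other way). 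They are special identities among theta gradients and constants, and they are precisely the content of the theorem; asserting them unproved leaves the theorem unproved. What your argument does establish, after fixing the sign, is that $HF_{77}$ is cut out by the cleared form of $(\alpha E-\beta F)^2-4\alpha\beta D_3D_4$ --- a correct statement, but not the statement about $\Omega_{77}$.

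Note also that you cannot close this gap simply by citing the paper's Lemma~\ref{f}: the paper's route never passes through $E$ and $F$ because the formula printed there, applied to the tuple $(a,b,c,d,e)$, already outputs the shape $(\cdot\,D_1+\cdot\,D_2)^2-4(\cdot)\,D_3D_4$; but that formula is the discriminant for the pairing $\{b,d\},\{c,e\}$, while the conic produced by $\det Q$ is paired $\{b,e\},\{c,d\}$, so reconciling the lemma with the actual contact conic is equivalent to the very identity you need. To turn your proposal into a proof you must therefore do the postponed verification explicitly: using Proposition~\ref{b}, write $D(77,13,31)D(77,46,51)\,E$, $D(77,64,46)D(77,31,26)\,F$, and the products $\Theta A D_1$, $\Theta B D_2$ (for the appropriate degree-four theta monomial $\Theta$) as monomials in even theta constants, and check that they match term by term with consistent signs, or, failing that, that their difference is an instance of Riemann's quadratic relations. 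Until that computation is actually carried out, the proposal is a plan rather than a proof.
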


Using the Jacobi's derivative formula given by Proposition \ref{b}, $\Omega_{77}$ can be rewritten as a polynomial in theta constants, which (up to  the constant factor of $\pi^6$) is equal to the modular form in Theorem 0.1. Thus proving Theorem~\ref{a} will complete the proof of our main result.

The proof is by directly computing the bitangents, and uses the following lemma:

\begin{lem}\label{f}
Let $l=l_1x+l_2y+l_3z$ be the equation of a line in $\mathbb{P}^2$ in homogeneous coordinates $(x:y:z)$, and suppose $m, n, k, s$  are lines written similarly. Then the two intersection points of the line $l=0$ and the quadric $mk-ns=0$ coincide if and only if the following expression vanishes:
\begin{equation}\label{5}
\Psi_{l,m,n,k,s}=\Big(\begin{vsmallmatrix}
l_1&l_2&l_3\\m_1&m_2&m_3\\k_1&k_2&k_3
\end{vsmallmatrix}+\begin{vsmallmatrix}
l_1&l_2&l_3\\n_1&n_2&n_3\\s_1&s_2&s_3
\end{vsmallmatrix}\Big)^2
-4\cdot\begin{vsmallmatrix}
l_1&l_2&l_3\\m_1&m_2&m_3\\n_1&n_2&n_3
\end{vsmallmatrix}\cdot\begin{vsmallmatrix}
l_1&l_2&l_3\\k_1&k_2&k_3\\s_1&s_2&s_3
\end{vsmallmatrix}.
\end{equation}
\end{lem}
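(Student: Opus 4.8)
The plan is to reduce the statement to a one-variable problem on the line $l$ and to recognise the vanishing of $\Psi_{l,m,n,k,s}$ as a discriminant. First I would apply a projective change of coordinates to move $l$ to the line $\{z=0\}$. Under an element of $GL(3,\CC)$ the covectors $l,m,n,k,s$ transform so that each of the four $3\times 3$ determinants in \eqref{5} is multiplied by one and the same nonzero scalar (the inverse determinant of the substitution); hence $\Psi_{l,m,n,k,s}$ is multiplied by its square, so its vanishing is unaffected, while the geometric condition ``the two intersection points coincide'' is manifestly projectively invariant. Thus there is no loss in assuming $l=(0,0,1)$.

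With $l=\{z=0\}$ the line is parametrized by $(x:y)\in\PP^1$, and each linear form restricts to a binary linear form $\bar m=m_1x+m_2y$, and similarly $\bar n,\bar k,\bar s$. The conic restricts to the binary quadratic $\bar m\bar k-\bar n\bar s$, whose two zeros are exactly the two intersection points of $l$ with $\{mk-ns=0\}$; these coincide precisely when the binary quadratic has a double root, i.e.\ when its discriminant $\beta^2-4\alpha\gamma$ (where $\alpha,\beta,\gamma$ are the coefficients of $x^2,xy,y^2$) vanishes. To avoid losing intersection points ``at infinity'' on $l$ (which arise when a leading coefficient degenerates, e.g.\ when the conic meets $l$ at $(0:1:0)$, or when the conic is reducible) I would keep the form homogeneous throughout rather than dehomogenising.

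The bridge between the two descriptions is the elementary identity, valid for $l=(0,0,1)$, that $\det(l;p;q)=p_1q_2-p_2q_1$ for any covectors $p,q$; equivalently, in coordinate-free form, writing $l=P\times Q$ for two points spanning the line, $\det(l;p;q)=(P\times Q)\cdot(p\times q)=(p\cdot P)(q\cdot Q)-(p\cdot Q)(q\cdot P)$. Using this, each determinant in \eqref{5} becomes the $2\times 2$ resultant of a pair of the restricted linear forms, and the coefficients $\alpha,\beta,\gamma$ of $\bar m\bar k-\bar n\bar s$ are simple bilinear combinations of the same data. Expanding $\beta^2-4\alpha\gamma$ and comparing monomials then identifies it, up to the nonzero scalar above, with $\Psi_{l,m,n,k,s}$, which finishes the proof.

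The main obstacle I anticipate is precisely this final matching: it is routine but extremely sign-sensitive. One must fix a single consistent orientation (a fixed ordering of the rows, equivalently a fixed order $P,Q$ of the two points of $l$) and then track the relative signs of the four determinants $\det(l;m;k),\det(l;n;s),\det(l;m;n),\det(l;k;s)$ together with the two cross-coefficients of the restricted conic. Lining these up exactly with the particular combination $\bigl(\det(l;m;k)+\det(l;n;s)\bigr)^2-4\det(l;m;n)\det(l;k;s)$ in \eqref{5} is where I would concentrate my attention, and I would pin down the signs by testing the identity on a degenerate configuration (for instance a line meeting a double line, where one intersection point runs to infinity on $l$) before trusting the general expansion.
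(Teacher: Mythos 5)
Your proposal follows essentially the same route as the paper's own proof: the paper likewise reduces to the restriction of the conic to the line and identifies $\Psi$ with the discriminant of the resulting binary quadratic, the only difference being that it solves $l=0$ for $y$ and divides the resulting discriminant $F$ by $l_2^2$, where you instead normalize $l=\{z=0\}$ by projective invariance (a cleaner but equivalent reduction). However, the sign-matching that you correctly single out as the delicate point is exactly where the argument cannot close as written: expanding $\beta^2-4\alpha\gamma$ and converting the $2\times 2$ brackets back into $3\times 3$ determinants gives
$$
\bigl(\det(l;m;k)-\det(l;n;s)\bigr)^2+4\det(l;m;n)\det(l;k;s)
$$
as the discriminant of $(mk-ns)|_l$, whereas the displayed expression \eqref{5} is the analogous formula for $(mk+ns)|_l$; the two differ by $4\bigl(\det(l;m;n)\det(l;k;s)+\det(l;m;s)\det(l;k;n)\bigr)$, which is not identically zero. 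Concretely, $l=(0,0,1)$, $m=(1,0,0)$, $n=(1,1,0)$, $k=(1,1,1)$, $s=(0,1,0)$ gives the smooth conic $mk-ns=x^2-y^2+xz$, which meets $\{z=0\}$ in the two distinct points $(1{:}1{:}0)$ and $(1{:}-1{:}0)$, yet \eqref{5} evaluates to $(1+1)^2-4\cdot 1\cdot 1=0$. So your final step of ``comparing monomials'' cannot literally identify the discriminant with \eqref{5} as printed; your instinct to pin down the signs on a degenerate test configuration is precisely the safeguard needed, and it is a check that the paper's proof (which simply asserts the match after dividing by $l_2^2$) also omits.
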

\begin{proof}
The proof is a direct computation: we plug in the equation of $l$ into $\{mk-ns=0\}$ and get:
\begin{align*}
&\big[(m_1l_2-m_2l_1)x+(m_3l_2-m_2l_3)z\big]\cdot \big[(k_1l_2-k_2l_1)x+(k_3l_2-k_2l_3)z\big]\\
&-\big[(n_1l_2-n_2l_1)x+(n_3l_2-n_2l_3)z\big]\cdot \big[(s_1l_2-s_2l_1)x+(s_3l_2-s_2l_3)z\big]=0.
\end{align*}
We dehomogenize at $z$. The discriminant $F$ of this quadric (in $x$) is a homogenous polynomial of degree 8 in the coefficients of $l,m,n,k,s$. We further observe that $F$ is divisible by $l_2^2$. Define $\Psi:={F}/{l_2^2}$. One can verify that $\Psi$ is independent of the dehomogenization.
\end{proof}

\begin{proof}[Proof of Theorem \ref{a}]
Using Proposition~\ref{b}, we write the coefficients of $Q(\tau,z)$ given by Proposition \ref{k} as rational functions of even theta constants. By clearing the denominators we have the equation of the plane quartic:
\begin{equation}\label{p}
\det Q(\tau,0)=(\theta_{75}\theta_{52}\theta_{43})^4\cdot(\theta_{04}^2\theta_{73}\theta_{60})^2\cdot[(af)^2+(be-cd)^2-2(af)(be+cd)]=0,
\end{equation}
where
$$
  \begin{array}{c}
   a=\theta_{66}\theta_{41}\theta_{50}b_{77},\qquad
   b=\theta_{70}\theta_{52}\theta_{43}b_{64},\qquad
   c=\theta_{40}\theta_{76}\theta_{67}b_{51}, \\
   d=\theta_{02}\theta_{25}\theta_{34}b_{13}, \qquad
   e=\theta_{37}\theta_{01}\theta_{10}b_{26}, \qquad
   f=\theta_{24}\theta_{12}\theta_{03}b_{35}.
  \end{array}
$$

Recall that on $\A{3}$ the vanishing of the theta-null modular form (the product of all even theta constants) defines the hyperelliptic locus, which we know is disjoint from the hyperflex locus by Proposition \ref{j}. More concretely, the vanishing locus of any even theta constant is an irreducible component of the hyperelliptic locus. Thus the vanishing locus defined by the common factor $(\theta_{75}\theta_{52}\theta_{43})^4\cdot(\theta_{04}^2\theta_{73}\theta_{60})^2$ lies in the hyperelliptic locus, we hence eliminate this common factor from $\det Q(\tau,0)$. Rewrite the remaining part:
$$
(af)^2+(be-cd)^2-2(af)(be+cd)=a\cdot F+(be-cd)^2
$$ where $F$ is a homogenous degree 3 polynomial in $a,b,c,d,e,f$. This is an equation of the quartic to which $\{a=0\}$ is a bitangent line. And the two tangent points are given by the two intersections of $\{a=0\}$ and the double conic $\{(be-cd)^2=0\}$:
$$
\{a=0\}\cap \{a\cdot F+(be-cd)^2=0\}=\{a=0\}\cap 2\cdot \{be-cd=0\}.
$$

By the lemma, plugging $a,b,c,d,e$ in to (\ref{5}) we have $$\Psi_{a,b,c,d,e}=\theta_{66}\theta_{73}\theta_{41}\theta_{50}\theta_{04}\cdot\Omega_{77}$$ where $\Omega_{77}$ is defined in~\eqref{4}. Finally we eliminate the common factor $\theta_{66}\theta_{73}\theta_{41}\theta_{50}\theta_{04}$ as its vanishing locus also lies in the hyperelliptic locus, thus $\Omega_{77}$ is the correct modular form.
\end{proof}

Using the modular form we can now compute the class of the hyperflex locus $\HF$ in $\M{3}$:
\begin{cor}
The class $[\HF]\in H^2(\M{3},\QQ)$ is equal to $308\cdot\lambda$.
\end{cor}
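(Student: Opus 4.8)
The plan is to read off the weight of $\Omega_{77}$ and convert it into a divisor class. Since each even theta constant $\theta_{ij}$ is a scalar modular form of weight $\tfrac12$ and each Jacobian determinant $D(n_1,n_2,n_3)$ has weight $\tfrac52$, both summands of $\Omega_{77}$ have the same weight: the squared bracket contributes $2\bigl(6\cdot\tfrac12+\tfrac52\bigr)=11$, while the last term contributes $12\cdot\tfrac12+2\cdot\tfrac52=11$. Hence $\Omega_{77}$ is a scalar modular form of weight $11$ for $\Gamma_3(2)$, i.e.\ a section of $(\det\mathbb{E})^{\otimes 11}$ on $\A{3}(2)$, so the class of its zero divisor is $11L$. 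Pulling back along the extended Torelli map $\bar u'$ and restricting to the interior (where $\bar u'$ is an isomorphism onto its image away from the hyperelliptic locus), the Hodge class $L$ becomes $\lambda$. By Theorem~\ref{a} the vanishing locus on the Jacobian locus is exactly $HF_{77}$, and I will argue below that this happens with multiplicity one and with no extra hyperelliptic component; granting this, $[HF_{77}]=11\lambda$ in $H^2(\M{3}(2),\QQ)$.

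Next I would descend to $\M{3}$ using the Galois cover $p':\M{3}(2)\to\M{3}$ with group $\Sp(6,\ZZ/2\ZZ)=\Gamma_3/\Gamma_3(2)$, which is \'etale over the interior. A point of $\M{3}(2)$ lying over $\HF$ is a curve $C$ together with a level-$2$ structure; a hyperflex of $C$ singles out one bitangent, which the level structure labels by a unique odd characteristic $m$, so the point lies in $HF_m$. Since curves with two or more hyperflexes form a locus of codimension at least two, this gives $(p')^{-1}(\HF)=\bigcup_{m\ \mathrm{odd}}HF_m$ (over the $28$ odd characteristics), and \'etaleness makes the pullback reduced, so $(p')^{*}[\HF]=\sum_{m}[HF_m]$. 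The group $\Sp(6,\ZZ/2\ZZ)$ acts transitively on odd characteristics and fixes $\lambda$, so every $HF_m$ is a translate of $HF_{77}$ and has class $11\lambda$; equivalently each $\Omega_m$ obtained by the Galois action again has weight $11$. Therefore $(p')^{*}[\HF]=28\cdot 11\,\lambda=308\,\lambda$. Writing $[\HF]=c\lambda$ in $H^{2}(\M{3},\QQ)=\QQ\lambda$ and using $(p')^{*}\lambda=\lambda$, we get $c\lambda=308\lambda$, hence $c=308$.

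The routine parts are the weight count and the covering-degree bookkeeping; the \emph{main obstacle} is justifying the two multiplicity claims that make $[HF_{77}]=11\lambda$ rather than a proper fraction of it. First, one must check that $\Omega_{77}$ vanishes to order exactly one along $HF_{77}$: this follows from the discriminant structure of Lemma~\ref{f}, since $\Psi_{a,b,c,d,e}$ is the discriminant of a quadratic in one variable and such a discriminant vanishes to first order along the generic locus where its two roots collide. Second, and more delicately, one must verify that the zero divisor of $\Omega_{77}$ on $\M{3}(2)$ contains no component of the hyperelliptic locus, so that nothing is subtracted from $11\lambda$. Here I would use that a generic point of each hyperelliptic component of $\A{3}(2)$ is cut out by the vanishing of a single even theta constant; specializing $\Omega_{77}=(T_1D_1+T_2D_2)^2-4T_1T_2D_3D_4$ to such a component kills the summands containing that theta (for instance on $\{\theta_{01}=0\}$ one is left with $(\theta_{02}\theta_{25}\theta_{34}\theta_{40}\theta_{67}\theta_{76})^2\,D(77,51,26)^2$), and it remains to confirm that the surviving product of even theta constants does not vanish there. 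Checking this for each relevant component is elementary but needs the explicit Jacobi expansions of the $D(n_1,n_2,n_3)$ via Proposition~\ref{b}, and it is the one place where the argument genuinely depends on the explicit form of $\Omega_{77}$.
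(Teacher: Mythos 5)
Your proof is correct and follows essentially the same route as the paper: the weight count giving $11$, the transitive Galois action of $\Sp(6,\ZZ/2\ZZ)$ spreading this class over the $28$ odd characteristics, and descent along the level cover (projection formula in the paper, injectivity of pullback on $\QQ\lambda$ in your version) yield $308\lambda$. The multiplicity issues you single out as the main obstacle are exactly what the paper's Theorem~\ref{a} settles --- its proof strips off the hyperelliptic theta-constant factors so that the zero divisor of $\Omega_{77}$ on $\A{3}(2)$ is precisely $HF_{77}$ --- so your extra verification is subsumed there rather than redone in the corollary's proof.
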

\begin{proof}
First we need to compute the weight of the modular form $\Omega_{77}$. The weight of $D(n_1,n_2,n_3)$ is $\frac{5}{2}$ and the weight of each $\theta_m$ is $\frac{1}{2}$. Therefore $12\cdot\frac{1}{2}+2\cdot\frac{5}{2}=11$ is the weight of the scalar modular form $\Omega_{77}$ with respect to $\Gamma_3(2)$.

Set-theoretically the hyperflex locus $HF\subset\A{3}$ is the image of $HF_{77}\subset\A{3}(2)$ under the level two covering map $p$. Moreover, for any odd characteristic $m$ we have $p(HF_m)=HF$. By the computation of weight we have $[HF_{77}]=11\cdot p^*L\in H^2(\A{3}(2),\QQ)$. We have
\begin{equation}\label{i}
p^*[HF]=\sum_{m\text{ odd}}[HF_m]=28\cdot 11\cdot p^*L=308\cdot p^*L.
\end{equation}
The second equality is due to the fact that $p$ is a Galois covering. Hence for all odd $m$, the class of $HF_m$ is equal to that of $HF_{77}$. Taking the pushforward of (\ref{i}) by $p$, by projection formula we obtain:
$$
[HF]=308\cdot L.
$$
Pulling this back under the Torelli map $u$, we obtain the result.
\end{proof}

\section{Extension of Theta Constants and Theta Gradients to the Boundary}

In order to use a modular form to compute the class of the closure of its zero locus in the compactification of $\M{3}$, we need to know its vanishing order at the boundary. We will first compute the extension of theta constants and theta gradients to the boundary.

\subsection{Characterization of the Orbits of the $\Gamma_g$-action on Sets of Characteristics}

We recall the following standard definition (see \cite{Do12} for a more detailed discussion):
\begin{definition}
A sequence of characteristics $m_1,m_2,\ldots,m_s$ is called {\em essentially independent} if for any choice of $1\leq i_1<i_2<\ldots<i_{2k}\leq s$ with $k\geq1$ we have
$$
m_{i_1}+m_{i_2}+\ldots+m_{i_{2k}}\neq 0 \mod 2.
$$
\end{definition}

Recall the notation $D_n$ and $P_V$ for the irreducible components of $p^{-1}D$ and $p^{-1}P$ in $\CA{3}(2)$. For the purpose of computing the vanishing orders of $\theta_m$ and $\grad_z\theta_m$, we need the following characterization of the orbits of the $\Gamma_g$-action on the sets of characteristics (recall that the action is defined by (\ref{3})).

\begin{prop}[\cite{Igu72}, \cite{SM94}] \label{y}
Two ordered sequences $m_1,m_2,\ldots,m_r$ and $n_1,n_2,\ldots,n_r$ of characteristics are conjugate under the action of $\Gamma_g$ if and only if $e(m_i)=e(n_i)$, and $e(m_i,m_j,m_k)=e(n_i,n_j,n_k)$ for any $1\leq i\leq r,1\leq i<j<k\leq r$, and if furthermore for any $I\subset\{1,2,\ldots, r\}$, $\{m_i\}_{i\in I}$ is an essentially independent subsequence if and only if $\{n_i\}_{i\in I}$ is an  essentially independent subsequence.
\end{prop}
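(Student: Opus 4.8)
The plan is to prove the two implications separately --- necessity by a direct invariance check and sufficiency by a Witt-type extension argument that carries the bulk of the work. For necessity I would first record that, for a fixed $\gamma=\begin{psmallmatrix}A&B\\C&D\end{psmallmatrix}$, the transformation (\ref{3}) is affine with linear part $L:=\begin{psmallmatrix}D&-C\\-B&A\end{psmallmatrix}\bmod 2\in\Sp(2g,\mathbb{F}_2)$ and a $\gamma$-dependent translation; in particular the action on characteristics factors through $\Gamma_g\twoheadrightarrow\Sp(2g,\mathbb{F}_2)$. The essential bookkeeping is that a sum $m_{i_1}+\cdots+m_{i_s}$ of $s$ characteristics picks up the translation $s$ times: for $s$ odd it again transforms as a single characteristic $\gamma\circ(m_{i_1}+\cdots+m_{i_s})$, while for $s$ even the translations cancel in pairs and it transforms by the invertible linear map $L$ alone. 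Granting the classical fact that the action preserves parity (\cite{Igu72}), the three conditions are then immediate: $e(m_i)=e(n_i)$ is parity preservation; since $m_i+m_j+m_k$ is again a characteristic, $e(m_i,m_j,m_k)=e(m_i)e(m_j)e(m_k)e(m_i+m_j+m_k)$ is a product of four preserved parities; and essential independence survives because each even sum is moved by the invertible $L$, hence vanishes before iff after.

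For sufficiency I would homogenize, replacing each characteristic $m$ by $\tilde m:=(m,1)\in\mathbb{F}_2^{2g}\oplus\mathbb{F}_2$. The first gain is purely linear-algebraic: a subfamily $\{m_i\}_{i\in I}$ is essentially independent exactly when $\{\tilde m_i\}_{i\in I}$ is linearly independent, so condition (3) says precisely that the two tuples carry the same linear-dependence pattern (the same matroid). The affine action lifts to a linear action fixing the functional that cuts out the characteristics, and I would identify the image of $\Gamma_g$ with the full orthogonal group of a nondegenerate quadratic space: the transformations of characteristics form the affine orthogonal group of the parity form $q_0(\epsilon,\delta)=\epsilon\cdot\delta$, which is abstractly $\Sp(2g,\mathbb{F}_2)$, and through the exceptional isomorphism $\Sp(2g,\mathbb{F}_2)\cong O_{2g+1}(\mathbb{F}_2)$ one realizes the characteristics as a distinguished orbit of vectors in an auxiliary space $(U,Q)$ so that $Q$ records parity and its polarization records the azygetic signs $e(m_i,m_j,m_k)$. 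With this dictionary in place, matching all three invariants shows that $\tilde m_i\mapsto\tilde n_i$ is a well-defined isometry between the spans; Witt's extension theorem extends it to a global isometry of $(U,Q)$, which I then read back as the desired $\gamma\in\Gamma_g$.

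The step I expect to be the main obstacle lies entirely on the sufficiency side: establishing that parities, azygetic triples, and the essential-independence matroid together form a \emph{complete} system of Witt invariants, i.e.\ that they determine the isometry class of the labeled configuration with no residual freedom, and that the isometry produced by Witt's theorem genuinely descends to an element of $\Gamma_g$ rather than an abstract orthogonal transformation. The delicate point is calibrating the auxiliary form $Q$ so that quantities which are \emph{not} $\Gamma_g$-invariant --- such as the individual pairings $b(m_i,m_j)$ --- do not become isometry invariants; this is exactly where the precise exceptional isomorphism over $\mathbb{F}_2$ enters. A more hands-on alternative that sidesteps the auxiliary space would induct on $r$: after conjugating $m_1,\dots,m_{r-1}$ onto $n_1,\dots,n_{r-1}$ using the statement for shorter tuples, one reduces to showing that the stabilizer of $m_1,\dots,m_{r-1}$ in $\Sp(2g,\mathbb{F}_2)$ acts transitively on the characteristics having the prescribed parity, azygetic relations to the $m_i$, and dependence status --- a concrete transitivity statement whose proof again amounts to checking that the listed data exhaust the orbit invariants.
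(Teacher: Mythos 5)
First, a point of reference: the paper does not prove Proposition \ref{y} at all --- it is quoted from [Igu72] and [SM94] --- so your proposal must be judged against the classical arguments rather than against anything in the text. Your necessity direction is sound: the action (\ref{3}) factors through $\Sp(2g,\mathbb{F}_2)$ and is affine with invertible linear part, sums of an odd number of characteristics transform again as characteristics while sums of an even number transform by the linear part alone, and together with parity preservation this yields all three invariances. Your homogenization remark --- that essential independence of $\{m_i\}_{i\in I}$ is exactly linear independence of the vectors $\tilde m_i=(m_i,1)$ --- is also correct and is the right way to think about condition (3).

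The sufficiency half, however, has a fatal gap, located exactly at the step you flagged but worse than you suspect: the auxiliary quadratic space $(U,Q)$ you want \emph{cannot exist} for $g\geq 2$, so this is not a matter of ``calibrating'' $Q$. Write $e(m)=(-1)^{q(m)}$ with $q(\epsilon,\delta)=\epsilon\cdot\delta$, and suppose $Q$ were a quadratic form on $U=\mathbb{F}_2^{2g}\oplus\mathbb{F}_2$ invariant under the lifted affine action and satisfying $Q(m,1)=q(m)$. The hyperplane $\{\lambda=0\}$ is preserved, and on it the lifted action is the linear action of all of $\Sp(2g,\mathbb{F}_2)$, which is transitive on nonzero vectors; so $Q|_{\lambda=0}$ is constant on nonzero vectors, and the function equal to $1$ on all nonzero vectors is not a quadratic form in dimension $\geq 3$ (bilinearity of its polarization fails on a linearly independent triple $x,y,z$: one gets $B(x,y+z)=1$ but $B(x,y)+B(x,z)=0$). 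Hence $Q|_{\lambda=0}\equiv 0$, and then $Q(m,1)=Q(0,1)+B_Q((m,0),(0,1))$ is affine-linear in $m$, so it cannot equal the genuinely quadratic function $q$. The same obstruction shows up in orbit counts: every isometry of the nonsingular $(2g+1)$-dimensional space fixes the radical vector, and the orbits of $O_{2g+1}(\mathbb{F}_2)$ on $\mathbb{F}_2^{2g+1}$ have sizes $1,1,2^{2g}-1,2^{2g}-1$, whereas the $2^{2g}$ characteristics split into orbits of sizes $2^{g-1}(2^g\pm 1)$ ($36$ and $28$ for $g=3$); so characteristics are not realizable as an orbit of vectors in $(U,Q)$. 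A further symptom: if $B_Q(\tilde m_i,\tilde m_j)$ were an invariant it would be a nonconstant $\Gamma_g$-invariant of \emph{pairs} of characteristics, but distinct pairs with prescribed parities form a single orbit, so no invariant pairing exists and the azygetic sign is irreducibly a three-variable invariant. The classical proofs work instead with the structures that do transform well --- the difference vectors $m_i+m_j$ transform linearly, and the true invariants are quantities like $q(m_i+m_j+m_k)$ and $\omega(m_i+m_j,m_k+m_l)$ --- and establish transitivity inductively, e.g.\ via symplectic transvections. That is precisely your ``hands-on alternative,'' but as written it is only a plan: the stabilizer-transitivity statement it reduces to \emph{is} the content of the proposition, and nothing in your sketch proves it.
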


Note that if $(m,n)$ and $(m',n')$ lie in the same $\Gamma_g$-orbit, then on the level two cover $\CA{3}(2)$, we must have $\ord_{D_n}\theta_m=\ord_{D_{n'}}\theta_{m'}$. Thus it suffices to compute this vanishing order for one element in each $\Gamma_g$-orbit of pairs $(m,n)$.

Since the group $\Gamma_g$ acts transitively on the set $D_n$ of boundary components, each orbit of $(m,n)$ under $\Gamma_g$ contains all possible values of $n$. We thus fix the boundary component $D_n$, and apply the proposition to find the orbits of $(m,n)$ when $m$ is varying: consider the set of triples $(m,n,0)$ where $n$ is fixed and $m$ is even (resp. odd), so that the parity of $m$ and $n$ remains the same. Thus by Proposition \ref{y}, the orbits only depend on $e(m,n,0)$. By definition $e(m,n,0)=e(m)e(n)e(m+n)$, hence there are two orbits of pairs $(m,n)$ for $n$ fixed, distinguished by the parity of $m+n$.

In order to calculate the vanishing order of theta constants on $P_V$,
we will also need the description of the orbits of the $\Gamma_g$-action on the set of pairs $(m, V)$, where $V$ is a symplectic 2-dimensional subspace of $(\ZZ/2\ZZ)^{2g}$.

\begin{prop}\label{z}
Let $V=span(n_1,n_2)$ be a fixed symplectic 2-dimensional subspace of $(\ZZ/2\ZZ)^{2g}$. There are two $\Gamma_g$-orbits of pairs $(m,V)$. These possibilities are distinguished by the number of even elements in the set $\{m+n_1,m+n_2,m+n_1+n_2\}$ being $1$ or $3$.
\end{prop}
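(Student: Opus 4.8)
The plan is to apply the classification of $\Gamma_g$-orbits given by Proposition \ref{y}, specialized to the data $(m, n_1, n_2)$ where $V = \mathrm{span}(n_1, n_2)$ is a fixed symplectic plane. Since $V$ is symplectic of dimension $2$, the pair $(n_1, n_2)$ has a fixed configuration up to the $\Gamma_g$-action: both $n_1$ and $n_2$ may be normalized (the symplectic condition pins down the symplectic form $e(n_1,n_2)$, and we may take $n_1, n_2$ to be a standard symplectic pair). Crucially, the stabilizer of $V$ inside $\Gamma_g$ still acts, and a change of symplectic basis of $V$ replaces $(n_1, n_2)$ by $(n_1', n_2')$ spanning the same $V$; I would first check that the invariant we are after is well-defined, i.e.\ independent of the chosen basis of $V$. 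The set $\{n_1, n_2, n_1+n_2\}$ is exactly the set of nonzero elements of $V$, so it is intrinsic to $V$; consequently the set $\{m+n_1, m+n_2, m+n_1+n_2\}$ (which is the coset $m+V$ minus $m$) depends only on $(m,V)$, and so does the count of even elements in it. This establishes that the proposed invariant is genuinely a function of the pair $(m,V)$.

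Next I would reduce the orbit problem for pairs $(m,V)$ to the orbit problem for ordered sequences of characteristics, to which Proposition \ref{y} applies directly. The natural sequence to consider is $(m, n_1, n_2)$, or equivalently the enriched data recording $m$ together with the nonzero vectors of $V$. By Proposition \ref{y}, the $\Gamma_g$-orbit of such a sequence is determined by the parities $e(m), e(n_1), e(n_2)$, the triple-signs $e(m_i,m_j,m_k)$, and the essential-independence pattern of every subset. The symplectic condition on $V$ fixes the parity data and the essential-independence pattern among $\{n_1, n_2, n_1+n_2\}$ (they span a symplectic plane, which is a single $\Gamma_g$-orbit datum), so the only remaining freedom is how $m$ sits relative to $V$. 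The invariants involving $m$ reduce, after using $e(m, n_i, 0) = e(m)e(n_i)e(m+n_i)$ as in the discussion preceding the proposition, to the parities $e(m+n_1), e(m+n_2), e(m+n_1+n_2)$ — precisely the parities of the three elements of $m+V\setminus\{m\}$. Hence the orbit is determined by how many of these three are even.

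I would then argue that the count of even elements among $\{m+n_1, m+n_2, m+n_1+n_2\}$ can only be $1$ or $3$, ruling out $0$ and $2$. This is a parity relation: writing $m = (\ep, \delta)$ and using $e(m+n) = (-1)^{(\ep+\ep_n)\cdot(\delta+\delta_n)}$, the product of the three parities $e(m+n_1)e(m+n_2)e(m+n_1+n_2)$ can be computed by expanding the bilinear forms; the symplectic normalization $e(n_1,n_2)=-1$ (equivalently the Weil pairing being nontrivial on $V$) forces this product to equal $+1$ independently of $m$. An even count of $+1$-contributions means the number of $-1$ parities (odd elements) is even, so the number of even elements among three is odd, i.e.\ $1$ or $3$. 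Thus both remaining cases occur and they are distinct $\Gamma_g$-orbits, giving exactly two orbits.

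The main obstacle I anticipate is the bookkeeping in the reduction step: Proposition \ref{y} is stated for ordered sequences, and I must verify that passing from the ordered triple $(m,n_1,n_2)$ to the unordered pair $(m,V)$ neither merges distinct orbits incorrectly nor splits a single one. Concretely, the essential-independence hypotheses in Proposition \ref{y} must be checked for \emph{all} subsets of $\{m, n_1, n_2\}$ — including subsets of even size such as $\{n_1, n_2\}$ and $\{m+n_i, m+n_j\}$-type combinations that arise implicitly — and I must confirm these patterns are constant across each of the two candidate configurations, so that the parity count $1$ versus $3$ is the \emph{only} distinguishing invariant. Verifying that the symplectic condition on $V$ rigidifies everything except this single count, and that both values are actually realized by some $m$, is the delicate part; once that is settled, the separation into exactly two orbits follows from Proposition \ref{y} without further computation.
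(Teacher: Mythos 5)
Your overall route --- pass to ordered data, apply Proposition \ref{y} to reduce the classification to the parities of $m+n_1$, $m+n_2$, $m+n_1+n_2$, then eliminate the counts $0$ and $2$ by a parity identity --- is essentially the paper's proof (the paper organizes the unordering step as an $S_3$-quotient $q\colon Y\to X$ from quadruples onto pairs $(m,V)$). But your key parity identity is false. Expanding the exponents modulo $2$, the cross terms $\ep_1\cdot\delta_2+\ep_2\cdot\delta_1$ survive, and so does one copy of $\ep\cdot\delta$:
\[
e(m+n_1)\,e(m+n_2)\,e(m+n_1+n_2)=e(m)\cdot(-1)^{\omega(n_1,n_2)},
\]
where $\omega$ is the symplectic form. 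For a symplectic plane $\omega(n_1,n_2)=1$, so the product equals $-e(m)$: it is \emph{not} independent of $m$, and it equals $+1$ precisely when $m$ is odd. For even $m$ the product is $-1$, the number of even elements in the triple is $0$ or $2$, and the conclusion you are proving is wrong: e.g.\ $m=(6,6)$ with $n_1=(4,0)$, $n_2=(0,4)$ (the paper's $V_0$) gives all three of $m+n_1,m+n_2,m+n_1+n_2$ odd, hence zero even elements --- and this configuration actually occurs in the paper's subsequent computation of vanishing orders of theta constants on $P_{V_0}$. So, taken at face value, your argument establishes a false statement.

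The repair is exactly the step the paper makes explicit: the proposition concerns \emph{odd} characteristics $m$ (those indexing bitangents and theta gradients, which is how it is applied later), and for $m$ odd the identity above becomes $e(m+n_1+n_2)=e(m+n_1)e(m+n_2)$, forcing the number of even elements to be odd, i.e.\ $1$ or $3$. A secondary point to tighten: the parities $e(n_1),e(n_2),e(n_1+n_2)$ of the elements of $V$ viewed as characteristics are \emph{not} determined by the symplectic condition (both the pattern ``two even, one odd'' and ``three odd'' occur), and they are not invariants of the pair $(m,V)$ anyway, since $V$ transforms linearly while characteristics transform by the affine action \eqref{3}; the correct encoding is the one from the discussion preceding the proposition, appending $0$ (or equivalently working with the coset $m+V$), so that the invariants produced by Proposition \ref{y} are exactly the parities $e(m+n_i)$. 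With these two corrections the remainder of your argument --- well-definedness of the count, distinctness of the two orbits, and realizability of both values --- is sound and coincides with the paper's proof.
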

\begin{proof}
Let $X$ be the set of pairs $(m,V)$, $Y$ be the set of quadruples $\{m, n_1,n_2,n_1+n_2\}$. Let the map $q:Y\to X$ be the quotient under the symmetric group $S_3$ permuting the last three elements. Thus $q$ is $\Gamma_g$-equivariant. Denote the induced map by $q': Y/\Gamma_g\rightarrow X/\Gamma_g$.

By Proposition \ref{y}, the $\Gamma_g$-action on $Y$ has eight orbits only depending on the parities of the triple $\{m+n_1,m+n_2,m+n_1+n_2\}$, namely $Y/\Gamma_g\simeq\mathbb{F}_2^3$. The map $q'$ forgets the order of elements in the triple. Hence the orbits of $\sigma$ depend only on the number of even elements in the triple $\{m+n_1,m+n_2,m+n_1+n_2\}$.

Let $\omega$ is the standard symplectic form. Observe that for $m$ odd and $n_1,n_2$ satisfying $\omega(n_1,n_2)\neq0$, we have $e(m+n_1+n_2)=e(m+n_1)e(m+n_2)$. The only possibilities for the number of even elements in the triple $\{m+n_1,m+n_2,m+n_1+n_2\}$ are thus 1 or 3. 
\end{proof}

\subsection{Extension to the Boundary}

Extension of theta constants and theta gradients to the boundary component $D_n$ is done in \cite{GrH12}. The vanishing orders are computed using the Fourier-Jacobi expansion of the theta function (this expansion is convenient for the computation that we will later do on $\Delta_1$):

\begin{equation}{\label{1}}
\theta\begin{bsmallmatrix}
\epsilon'&\epsilon\\
\delta'&\delta\\
\end{bsmallmatrix}\big(\begin{bsmallmatrix}
\tau'&b\\
b^t&\tau\\
\end{bsmallmatrix},0\big)=\sum_{k'\in\mathbb{Z},k''\in\mathbb{Z}^{g-1}}\exp \pi i\big[2(k'+\frac{\epsilon'}{2})b(k''+\frac{\epsilon}{2})\big]A(k',k'')
\end{equation}
where
$$
A(k',k'')=\exp \pi i\left([(k'+\frac{\epsilon'}{2})^2\tau'+(k'+\frac{\epsilon'}{2})\delta]+[(k''+\frac{\epsilon}{2})^t\tau(k''+\frac{\epsilon'}{2})+(k''+\frac{\epsilon}{2})^t\delta]\right).
$$

By the characterization of the orbits of the $\Gamma_g$-action we only need to work on a chosen boundary component $D_{n_0}$ corresponding to $n_0=\begin{bsmallmatrix}0&0&...&0\\1&0&...&0\end{bsmallmatrix}$. Due to the parity of the theta constants and the theta gradients, we assume the characteristic $m$ is even for $\theta_m$, and is odd for $\grad_z\theta_m$, so that they don't vanish identically.

The vanishing orders of $\theta_m(\tau,0)$ and $\grad_z\theta_m(\tau,0)$ on $D_{n_0}$ are as follows:

\begin{prop}\label{h} \cite[Prop.~3.3]{GrH12}
We have the following:
\begin{equation}
\\ord_{D_{n_0}} \theta_m(\tau, 0)=\begin{cases}
   0 & \mbox{if } e(m+n_0)=1 \\
   \frac{1}{8}     & \mbox{if }e(m+n_0)=-1
  \end{cases}
\end{equation}

\begin{equation}
\ord_{D_{n_0}}\grad_z\theta_m(\tau,z)|_{z=0}=\begin{cases}
  (\frac{1}{2},0,\ldots, 0) & \mbox{if } e(m+n_0)=-1 \\
  (\frac{1}{8},\frac{1}{8},\ldots,\frac{1}{8})    & \mbox{if }e(m+n_0)=1
  \end{cases}
  \end{equation}
The notation above indicates the vanishing order for each partial derivative \\
$(\frac{\partial}{\partial z_1}\theta,\frac{\partial}{\partial z_2}\theta\ldots\frac{\partial}{\partial z_g}\theta)$.
\end{prop}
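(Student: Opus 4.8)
The plan is to read off both vanishing orders directly from the Fourier-Jacobi expansion \eqref{1} by organizing its terms according to the exponent of the degenerating parameter. The reduction to the standard component $D_{n_0}$ is already in place via Proposition \ref{y}, so I would first fix a local coordinate $t=\exp(2\pi i\tau')$ on $\A{g}(2)$ transverse to $D_{n_0}$, so that $D_{n_0}=\{t=0\}$ and $\exp(\pi i\alpha\tau')=t^{\alpha/2}$. In \eqref{1} each summand carries the factor $\exp\bigl(\pi i(k'+\tfrac{\ep'}{2})^2\tau'\bigr)=t^{(k'+\ep'/2)^2/2}$, where $\ep'=\ep_1$ is the first coordinate of the $\ep$-part of $m$ (the entry in the degenerating direction). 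Hence the $t$-order of the leading surviving term is the minimum of $(k'+\tfrac{\ep'}{2})^2/2$ over those $k'$ whose coefficient is nonzero: this minimum is $0$ (at $k'=0$) when $\ep'=0$, and $\tfrac18$ (at $k'=0,-1$) when $\ep'=1$.

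Next I would do the parity bookkeeping that ties $\ep'$ to the hypothesis. Writing $m=(\ep,\delta)$ and $n_0=\bigl(\mathbf{0},(1,0,\dots,0)\bigr)$, a one-line computation gives $e(m+n_0)=e(m)(-1)^{\ep'}$. For $\theta_m$ we take $m$ even, so $e(m+n_0)=1\iff\ep'=0$ and $e(m+n_0)=-1\iff\ep'=1$; the two cases then give orders $0$ and $\tfrac18$, as claimed. For $\grad_z\theta_m$ we take $m$ odd, so the correspondence flips: $e(m+n_0)=-1\iff\ep'=0$ and $e(m+n_0)=1\iff\ep'=1$. To handle the gradient I would differentiate \eqref{1} termwise before setting $z=0$: the partial $\partial/\partial z_1$ inserts a factor $2\pi i(k'+\tfrac{\ep'}{2})$ into each summand, while $\partial/\partial z_j$ for $j\ge2$ inserts $2\pi i(k_j+\tfrac{\ep_j}{2})$ and leaves the $k'$-dependence untouched. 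When $\ep'=0$ this extra factor annihilates the otherwise-leading $k'=0$ term of $\partial/\partial z_1$, pushing its leading contribution to $k'=\pm1$ with order $1/2$, whereas the remaining partials keep the $k'=0$ term at order $0$; this produces $(\tfrac12,0,\dots,0)$. When $\ep'=1$ the factor $k'+\tfrac12$ is nonzero at $k'=0,-1$, so every partial retains the minimal term at order $\tfrac18$, giving $(\tfrac18,\dots,\tfrac18)$.

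The step I expect to be the main obstacle is verifying that in each case the leading Fourier-Jacobi coefficient does not vanish identically on $D_{n_0}$, for otherwise the order would jump. These coefficients are genus $(g-1)$ theta functions (or their $z''$-gradients) in the reduced variables $\tau$, evaluated at arguments $z''=\pm b$ or $\pm\tfrac{b}{2}$ arising from the cross term $2(k'+\tfrac{\ep'}{2})b(k''+\tfrac{\ep}{2})$ in \eqref{1}. Their non-vanishing is controlled by the parity of the restricted characteristic $(\ep,\delta)$: since $e(m)=(-1)^{\ep'\delta'+\ep\cdot\delta}$, when $\ep'=0$ this restricted characteristic has the same parity as $m$, so for even $m$ the $k'=0$ coefficient is the generically nonzero genus $(g-1)$ theta constant $\theta[\ep;\delta](\tau,0)$, and for odd $m$ the relevant component is the nonzero gradient of an odd genus $(g-1)$ theta function at the origin. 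When $\ep'=1$ I would check that the $k'=0$ and $k'=-1$ terms combine, up to a unit, into $2\,\theta[\ep;\delta](\tau,\tfrac{b}{2})$ (respectively its gradient), which is again generically nonzero on the boundary. Assembling these non-vanishing statements with the order count of the first paragraph then yields the stated orders.
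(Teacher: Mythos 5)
Your proposal is correct, and it is exactly the computation the paper points to: the paper itself gives no proof of this proposition (it is quoted from \cite{GrH12}), but it sets up the Fourier--Jacobi expansion \eqref{1} as the tool, and your argument carries that computation out — reading off the $t$-order $(k'+\ep'/2)^2/2$ of each term, translating $e(m+n_0)=e(m)(-1)^{\ep'}$, and noting that $\partial/\partial z_1$ kills the $k'=0$ term when $\ep'=0$. Your attention to the genericity of the leading Fourier--Jacobi coefficients (the genus $g-1$ theta constants, gradients, and the values $\theta[\ep;\delta](\tau,\pm b/2)$) is precisely the point that makes the order count an equality rather than an inequality, and your parity bookkeeping there checks out.
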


For the boundary $\Delta_1$, we can do a similar computation, which to our knowledge has not been done in literature. Following \cite{Yam80} and \cite{Fay73}, we will consider the pinching/plumbing family of Riemann surfaces pinching a cycle homologous to zero. For a Riemann surface $C$ of genus $g$, we fix an element of $\pi_1(C)$ which maps to zero in homology and is represented by a simple closed curve, and consider the plumbing family $\mathcal{C}\subset\overline{\mathcal{M}}_3$ parameterized by shrinking the length $s$ of this curve to zero: for $s\ne 0$ the curve $C_s$ is smooth, while for $s=0$ the curve $C_0$ lies in $\Delta_1$. We denote the period matrix of $C_s$ by $\tau_s$. By \cite[cor.~2]{Yam80}, the expansion of $\tau_s$ near $s=0$ is:
$$
\tau_s=\begin{bmatrix}
\tau_1&0\\
0&\tau_2\\
\end{bmatrix}-s\begin{bmatrix}
0&R\\
R^T&0\\
\end{bmatrix}+O(s),
$$
where $\tau_1\in \Mat_{g_1\times g_1}(\CC)$ and $\tau_2\in \Mat_{g_2\times g_2}(\CC)$ where $g_1$ and $g_2$ are the genera of the two irreducible components of $C_0$, and $R\in\Mat_{g_1\times g_2}(\CC)$ is some matrix independent of $s$. We recall the factorization
\begin{equation}\label{2}
\theta\begin{bsmallmatrix}
\epsilon'&\epsilon\\
\delta'&\delta\\
\end{bsmallmatrix}(\begin{bsmallmatrix}
\tau'&0\\
0&\tau''\\
\end{bsmallmatrix},0)=\theta\begin{bsmallmatrix}\epsilon'\\\delta'\end{bsmallmatrix}(\tau',0)\times\theta\begin{bsmallmatrix}\epsilon\\\delta\end{bsmallmatrix}(\tau'',0).
\end{equation}
In our case $g_1=1$, $g_2=2$. 
As in the case of $\Delta_0$, in the following discussion we assume $m=\begin{bsmallmatrix}\epsilon'&\epsilon\\ \delta'&\delta\\ \end{bsmallmatrix}$ is even for $\theta_m$, odd for $\grad_z\theta_m$. Since $\ep'\delta'+\ep\delta=0$, the product in (\ref{2}) vanishes if and only if $\ep'\cdot\delta'=1$ (because then both of the factors are odd functions with respect to $z$). We now substitute $\tau_s$ given by the family above into the Taylor expansion with respect to $b=s\cdot R$ given by~\eqref{1}, which yields $\ord_b\theta_m(\tau,0)=1$ if $\ep'\cdot\delta'=1$, and it does not vanish generically otherwise.

Take now the component $P_{V_0}$ corresponding to:

$$V_0=Span(n_1=\begin{bsmallmatrix}1&0&0\\0&0&0\end{bsmallmatrix},n_2=\begin{bsmallmatrix}0&0&0\\1&0&0\end{bsmallmatrix}).$$
Then $\bar{u}^{-1}(P_{V_0})$ is a component of $p'^{-1}\Delta_1$ in $\CM{3}(2)$. Thus from the discussion above, one can conclude:

\begin{prop} On the boundary component $\bar{u}^{-1}P_{V_0}$ in $\CM{3}(2)$, we have:
\begin{equation}
\ord_{b}\theta_m(\tau,0)=\begin{cases}
   1 & \mbox{if } e(m+n_1)=e(m+n_2)=-1\\
   0    & \mbox{otherwise }
  \end{cases}
\end{equation}
\begin{equation}
\ord_{b}\grad_z\theta_m(\tau,0)=\begin{cases}
   (0,1,1) & \mbox{if } e(m+n_1)=e(m+n_2)=1\\
   (1,0,0)    & \mbox{otherwise. }
  \end{cases}
\end{equation}
The notation again indicates the vanishing order for each partial derivative of $\theta$.
\end{prop}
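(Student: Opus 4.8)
The plan is to read off both vanishing orders from the first-order term of the plumbing expansion of $\theta_m$ in the off-diagonal block $b$. By the Yamada expansion $\tau_s=\diag(\tau_1,\tau_2)-s\begin{bsmallmatrix}0&R\\R^t&0\end{bsmallmatrix}+\cdots$ we have $b=-sR+\cdots$, so whenever $R$ is generically nonzero the order $\ord_b$ coincides with the order in the plumbing parameter $s$, which is the vanishing order along the boundary divisor $\bar{u}^{-1}P_{V_0}$. At $b=0$ the factorization~\eqref{2} holds, so I write $\theta^{(1)}(z'):=\theta\begin{bsmallmatrix}\ep'\\\delta'\end{bsmallmatrix}(\tau_1,z')$ and $\theta^{(2)}(z''):=\theta\begin{bsmallmatrix}\ep\\\delta\end{bsmallmatrix}(\tau_2,z'')$ for the genus $1$ and genus $2$ factors.

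First I would extract the linear-in-$b$ term. Grouping the sum~\eqref{1} by the genus-$1$ index $k'$ and Taylor expanding the genus-$2$ theta whose argument is shifted by $(k'+\ep'/2)b$ gives
\begin{equation*}
\theta_m(\tau_s,z)=\theta^{(1)}(z')\,\theta^{(2)}(z'')+\frac{1}{2\pi i}\,\partial_{z'}\theta^{(1)}(z')\cdot\big(b\,\grad_{z''}\theta^{(2)}(z'')\big)+O(b^2),
\end{equation*}
where I used $\sum_{k'}(k'+\ep'/2)\exp\pi i[\cdots]=\tfrac{1}{2\pi i}\partial_{z'}\theta^{(1)}(z')$. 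This single identity controls everything: differentiating in $z'$ or in $z''$ and setting $z=0$ produces the leading behaviour of each component of $\grad_z\theta_m$.

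Next comes the parity bookkeeping. Writing $m=\begin{bsmallmatrix}\ep'&\ep\\\delta'&\delta\end{bsmallmatrix}$, a direct check gives $e(m+n_1)=e(m)(-1)^{\delta'}$ and $e(m+n_2)=e(m)(-1)^{\ep'}$. For the theta constant ($m$ even, $e(m)=1$) this says $e(m+n_1)=e(m+n_2)=-1\iff\ep'=\delta'=1\iff\theta^{(1)}$ is odd, which is exactly the vanishing criterion of~\eqref{2} already recorded in the text, giving $\ord_b\theta_m=1$ in that case and $0$ otherwise. For the gradient ($m$ odd) the same formulas give $e(m+n_1)=e(m+n_2)=1\iff\ep'\delta'=1$, i.e.\ $\theta^{(1)}$ odd and $\theta^{(2)}$ even (Case A); the complementary case has $\theta^{(1)}$ even and $\theta^{(2)}$ odd (Case B). In Case A, since $\theta^{(1)}(0)=0\neq\partial_{z'}\theta^{(1)}(0)$ and $\theta^{(2)}(0)\neq0$, $\grad_{z''}\theta^{(2)}(0)=0$, the $\partial_{z_1}$-component is nonzero at order $b^0$ while the $\partial_{z_2},\partial_{z_3}$-components start at order $b^1$ through the Hessian of $\theta^{(2)}$; this yields $(0,1,1)$. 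In Case B the roles swap and one gets $(1,0,0)$.

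The step I expect to be the main obstacle is proving the orders are \emph{exactly} $1$ (and $0$) rather than mere inequalities. The lower bounds are automatic from the expansion, but exactness requires the first-order coefficients to be generically nonzero: this uses that the gradient of an odd genus-$2$ theta at the origin is generically nonvanishing, that the even genus-$1$ theta has nonvanishing second $z'$-derivative at the origin, and that $b\cdot\mathrm{Hess}_{z''}\theta^{(2)}(0)$ does not vanish for generic $R$ (equivalently, that the even genus-$2$ theta has nondegenerate Hessian at the origin). Verifying these nonvanishing statements at the generic point of $\bar{u}^{-1}P_{V_0}$ --- together with checking that the coupling matrix $R$ is generically nonzero so that $\ord_b=\ord_s$ --- is the only genuinely nontrivial input; everything else is the bookkeeping above.
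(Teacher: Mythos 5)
Your proposal is correct and follows essentially the same route as the paper: the Yamada plumbing expansion, the Fourier--Jacobi expansion~\eqref{1} Taylor-expanded in the off-diagonal block $b$, the factorization~\eqref{2} at $b=0$, and the identical parity bookkeeping $e(m+n_1)=(-1)^{\delta'}e(m)$, $e(m+n_2)=(-1)^{\ep'}e(m)$. The paper's own proof is terser --- it records only the parity observation, cites the expansion discussion preceding the proposition for the theta-constant case, and omits the gradient case as ``parallel'' --- so your explicit linear-in-$b$ term and the case-by-case analysis of the gradient components (including the generic nonvanishing inputs needed for exactness of the orders) simply fill in the details the paper leaves out.
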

\begin{proof}
We have the observation:
\begin{align*}
e(m+n_1)&=(-1)^{(\epsilon'+1)\delta'+\epsilon^t\delta}=(-1)^{\delta'}\cdot e(m)\\
e(m+n_2)&=(-1)^{\epsilon'}\cdot e(m).
\end{align*}
Since we assume $m$ is even, we have $e(m)=1$. The conditions in the proposition are the same as $\ep'=\delta'=1$. The computation for theta gradients is parallel to the computation for theta constants. We therefore omit it here.
\end{proof}

\subsection{Class of the Closure of the Hyperflex Locus}

Let $\Omega_m$ be the image of $\Omega_{77}$ under the action of $\Gamma_g$, so that it is a modular form with respect to $\Gamma_3(2)$ whose zero locus in $\A{3}(2)$ is $HF_m$.  Denote for simplicity $d_{m,n}:=\ord_{D_n}\Omega_m(\tau,0)$, and by $p_{m,V}$  the vanishing order of the pull-back of $\Omega_m(\tau,0)$ on $\bar{u}^{-1}P_{V}$. There are only two possible values of $d_{m,n}$ corresponding to the two $\Gamma_g$-orbits on $(m,n)$ --- we denote these vanishing orders by $d_0$ and $d_1$ for the cases $e(m+n)=0$ and $1$. Similarly let $p_1$ and $p_3$ be the values of $p_{m,V}$ in the $\Gamma_g$-orbit on the set of pairs $(m,V)$ where the subindex denotes the number of even elements in the triple from Proposition \ref{z}. We have the following:

\begin{prop}
In $\CM{3}$, we have
$$
[\overline{\HF}]=308\cdot\lambda-(16d_0+12d_1)\cdot\delta_0-(10p_3+18p_1)\cdot\delta_1.
$$
\end{prop}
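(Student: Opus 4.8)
The plan is to carry out the whole computation on the fibre product $\CM{3}(2)$ of diagram~\eqref{n}, where the contracted locus $P$ becomes a genuine divisor, and only at the very end to descend to $\CM{3}$ using the injectivity of $p'^*$ on $\Pic_\QQ$. First I would pull back $\Omega_m$ along $\bar u'$: since $\Omega_m$ has weight $11$, it is a section of the $11$-th power of the determinant of the Hodge bundle on $\CA{3}(2)$, and using $(\bar u')^*(p^*L)=p'^*\lambda$ its divisor has class $11\,p'^*\lambda$ on $\CM{3}(2)$. I would then decompose this divisor into horizontal and vertical parts. Over the generic point of the component $(\bar u')^{-1}(D_n)$ of $p'^{-1}\Delta_0$ the extended Torelli map is a local isomorphism onto a neighbourhood of the generic point of $D_n$, so the vanishing order there is exactly $d_{m,n}=\ord_{D_n}\Omega_m$; over the component $\bar u^{-1}(P_V)$ of $p'^{-1}\Delta_1$, which is contracted to the codimension-two locus $P_V$, the transverse vanishing order is precisely the order $p_{m,V}$ in the plumbing parameter $b$ computed in Section~2. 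This produces, on $\CM{3}(2)$,
\begin{equation*}
11\,p'^*\lambda=[\overline{HF_m}']+\sum_n d_{m,n}\,[(\bar u')^{-1}D_n]+\sum_V p_{m,V}\,[\bar u^{-1}P_V],
\end{equation*}
where $\overline{HF_m}'$ is the closure of $HF_m$ in $\CM{3}(2)$.

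Next I would sum this identity over the $28$ odd characteristics $m$. Because $p'$ is the Galois level cover and $\Gamma_3$ acts transitively on odd characteristics with $\HF$ disjoint from the branch locus, the horizontal parts add up to $\sum_m[\overline{HF_m}']=p'^*[\overline{HF}]$; likewise the boundary components are permuted transitively, so $\sum_n[(\bar u')^{-1}D_n]=p'^*\delta_0$ and $\sum_V[\bar u^{-1}P_V]=p'^*\delta_1$. The real content is then to show that the summed boundary multiplicities $\sum_m d_{m,n}$ and $\sum_m p_{m,V}$ are independent of $n$ and of $V$, and to evaluate them.

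For $\Delta_0$, using $e(m+n)=e(m)e(n)(-1)^{\omega(m,n)}$ together with the elementary character sums $\sum_m e(m)=36-28=8$ and $\sum_m e(m)e(m+n)=0$ for $n\neq 0$, a short count gives $\#\{m\ \mathrm{odd}:m+n\ \mathrm{even}\}=16$ and hence $\#\{m\ \mathrm{odd}:m+n\ \mathrm{odd}\}=12$, independently of $n$; so $\sum_m d_{m,n}=16d_0+12d_1$. For $\Delta_1$, writing $V=\mathrm{span}(n_1,n_2)$ with $\omega(n_1,n_2)=1$, the orbit invariant of Proposition~\ref{z} for $m$ odd distinguishes whether $m+n_1$ and $m+n_2$ are both even (three even elements in the triple) or not (one even element). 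The analogous triple-character sum, whose cross term $\sum_m e(m)e(m+n_1)e(m+n_2)$ equals $-8$ precisely because $\omega(n_1,n_2)=1$, counts $10$ odd $m$ in the first class and $18$ in the second, whence $\sum_m p_{m,V}=10p_3+18p_1$, again independent of $V$.

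Combining the above, the summed identity becomes $p'^*(308\lambda)=p'^*[\overline{HF}]+(16d_0+12d_1)\,p'^*\delta_0+(10p_3+18p_1)\,p'^*\delta_1$, and since $p'^*$ is injective on $\Pic_\QQ$ (as $p'_*p'^*$ is multiplication by the degree) the stated formula follows immediately. I expect the main obstacle to be the $\delta_1$-term: because the Torelli map contracts $\Delta_1$ to the codimension-two locus $P=\A{1}\times\A{2}$, the divisor of $\Omega_m$ on $\CA{3}(2)$ does not see $P_V$ at all, so one genuinely must pass to $\CM{3}(2)$ and invoke the $\ord_b$ computation of Section~2 to recover the correct coefficient; the secondary delicate point is the verification, via the character-sum counts above, that the boundary multiplicities are independent of the chosen $n$ and $V$, which is exactly what lets one factor them through $p'^*\delta_0$ and $p'^*\delta_1$.
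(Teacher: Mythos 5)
Your proposal is correct and follows essentially the same route as the paper: both arguments work on the fibre product $\CM{3}(2)$, decompose the divisor of $\bar{u}'^*\Omega_m$ into the closure of the preimage of $HF_m$ plus vertical components along $\bar{u}'^{-1}D_n$ and $\bar{u}'^{-1}P_V$ with multiplicities $d_{m,n}$ and $p_{m,V}$, sum over the $28$ odd characteristics using the counts $16/12$ and $10/18$, and descend to $\CM{3}$ via the degree of $p'$ (your appeal to injectivity of $p'^*$, justified by $p'_*p'^*=\deg(p')$, is exactly the paper's projection-formula pushforward). The only difference is cosmetic: you verify the counts by explicit character sums where the paper simply cites a direct computation.
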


\begin{proof}
It can be concluded from a direct computation that for each $n\in (\ZZ/2\ZZ)^6-0$, there are 16 odd $m$ such that $m+n$ is even, and 12 odd $m$ such that $m+n$ is odd; for a fixed $V$, there are 18 odd theta characteristics $m$ lying in the orbit corresponding to the case when the number of even elements in the triple $(m+n_1,m+n_2,m+n_1+n_2)$ is 1, and 10 odd theta characteristics in the other orbit.

Consider the commutative diagram~(\ref{n}). Summing over all $m$, on $\CM{3}(2)$ we have:
$$
\bar{u}'^*\left(\sum_{m \text{ odd}}{[\CHF_m]}\right)=308\cdot p'^*\lambda-\sum_{m,n}{d_{m,n}\cdot\bar{u}'^*D_n}-\sum_{V,n}{p_{m,V}\cdot\bar{u}'^*P_V}.
$$
On the right hand side we have:
\begin{align*}
\sum_{m,n}{d_{mn}\cdot\bar{u}'^*D_n} &= \bar{u}'^*\left(\sum_{m+n \mbox{ even} }d_0D_n+\sum_{m+n\mbox{ odd}}d_1D_n\right)\\
&=\bar{u}'^*\left(d_0\sum_{n}16D_n+d_1\sum_{n}12D_n\right)\\
&=\bar{u}'^*\left((16d_0+12d_1)\sum_{n}D_n\right)\\
&=(16d_0+12d_1)\cdot \bar{u}'^*(p^*D)\\
&=(16d_0+12d_1)\cdot p'^*{\delta_0}.
\end{align*}
Similarly we have $\sum_{V,n}{p_{m,V}\cdot\bar{u}'^*P_V}=(10p_3+18p_1)\cdot p'^*{\delta_1}$. For the same reason as in Equation (\ref{i}), we have $\bar{u}'^*\left(\sum_{m \text{ odd}}{[\CHF_m]}\right)=p'^*[\overline{\HF}]$. Pushing forward by $p'$, by the projection formula both sides are multiples of $\deg(p')$. Note that the level cover map branches along the boundary components, but the projection formula applies regardless of the branching. Finally we divide both sides by $\deg(p')$ and  have the equality claimed.
\end{proof}

We now use the results from previous subsection to compute $d_0,d_1,$ and $p_1,p_3$.

\begin{prop}\label{g}
We have the following:
\begin{equation}
d_{m,n}=\begin{cases}
   \frac{5}{4} & \text{if $m+n$ is even} \\
   1    & \text{otherwise}
  \end{cases}
\end{equation}
\begin{equation}
p_{m,V}=\begin{cases}
   4 & \text{all elements in the triple are even} \\
   2  & \text{otherwise}.
  \end{cases}
\end{equation}
\end{prop}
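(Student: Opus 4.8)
The plan is to compute, in each of the two boundary orbits of Propositions \ref{y} and \ref{z}, the vanishing order of $\Omega_{77}$ (or of a $\Gamma_g$-translate $\Omega_m$) along the boundary, and to match the result with the claimed tables. By those propositions it is enough to work on the two fixed reference components $D_{n_0}$ and $\bar u^{-1}P_{V_0}$, on which Proposition \ref{h} and the preceding proposition give clean vanishing rules; each orbit is then realized by a suitable representative characteristic $m$. Concretely, since $\Omega_m$ is the image of $\Omega_{77}$ under some $\gamma\in\Gamma_g$ with $\gamma\circ 77=m$, one has $\ord_{D_{n_0}}\Omega_m=\ord_{D_{\gamma\circ n_0}}\Omega_{77}$, and similarly on $P_{V_0}$; so I would fix $n_0$ (resp.\ $V_0$) and let $m$ run over one representative of each orbit, noting for instance that the pair $(77,n_0)$ already lies in the orbit where $m+n_0$ is even.

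The computation rests on the explicit form of $\Omega_{77}$. Using Jacobi's derivative formula (Proposition \ref{b}) to replace each Jacobian determinant by a product of five even theta constants turns $\Omega_{77}$ into the expression $(\Theta_1+\Theta_2)^2-4\Theta_3$ of Theorem 0.1, where $\Theta_1,\Theta_2$ are products of eleven theta constants and $\Theta_3$ a product of twenty-two. Since the order along a boundary divisor is a valuation, it is additive over products, so $\ord\Theta_i$ is just the sum of the orders of its factors. On $D_{n_0}$ each factor contributes $0$ or $\tfrac18$ by Proposition \ref{h}, and on $\bar u^{-1}P_{V_0}$ each factor contributes $0$ or $1$ by the preceding proposition; summing over the characteristics listed in $\Theta_1,\Theta_2,\Theta_3$ then gives the orders of the four monomials $\Theta_1^2,\Theta_1\Theta_2,\Theta_2^2,\Theta_3$ in each of the four cases.

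The delicate step, which I expect to be the main obstacle, is to pass from these monomial orders to the order of $\Omega_{77}$ itself. Because $\Omega_{77}$ is constructed as a discriminant $(\Theta_1+\Theta_2)^2-4\Theta_3$, the minimal-order monomials typically coincide --- on $D_{n_0}$ in the even orbit, for example, all four of $\Theta_1^2,\Theta_1\Theta_2,\Theta_2^2,\Theta_3$ have order $\tfrac{10}{8}$ --- so one cannot simply take the minimum: the leading Fourier--Jacobi coefficients could cancel and raise the order. I would resolve this by extracting the leading coefficient of $\Omega_{77}$ from the Fourier--Jacobi expansion (\ref{1}) on $D_{n_0}$, resp.\ from the plumbing expansion (\ref{2}) on $\bar u^{-1}P_{V_0}$; these leading coefficients are products of lower-genus theta constants, and the heart of the matter is to check that the corresponding combination $(\Theta_1+\Theta_2)^2-4\Theta_3$ of leading terms does not vanish identically. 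Verifying this non-vanishing pins the orders at exactly $\tfrac54$ and $1$ along $D_n$, and $4$ and $2$ along $P_V$, which is the assertion of the proposition.
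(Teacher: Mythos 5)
Your proposal is correct and follows essentially the same route as the paper: reduce to one representative per $\Gamma_g$-orbit, compute the vanishing order of each monomial additively from Proposition \ref{h} and its analogue on $P_V$, and then rule out cancellation among the coinciding lowest-order terms (the paper dispatches this last point with an explicit by-hand check of the leading coefficients, which is exactly the verification you flag as the heart of the matter). The only cosmetic difference is that the paper fixes $m=77$ and varies the boundary component (choosing $n=04,06$ and $V=V_0,V_1$) rather than fixing $D_{n_0}$, $P_{V_0}$ and varying $m$, which is equivalent by the same $\Gamma_g$-equivariance you invoke.
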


\begin{proof}
To do the calculation it is enough to choose a special representative in each orbit. Fix $m=77$. For $d_0$ we choose $n=04$ so that $m+n$ is even. The vanishing orders on $D_{04}$ of $\theta_{43}, \theta_{52}, \theta_{75}, \theta_{40},\theta_{67}, \theta_{76}$ are all $1/8$, while other theta constants involved in the expression of $\Omega_{77}$ do not vanish identically on $D_{04}$. We also have $\ord_{D_{04}}D(77,64,13)=\ord_{D_{04}}D(77,51,26)=1/4, \ord_{D_{04}}D(77,64,51)=3/8$, and $\ord_{D_{04}}D(77,13,26)=1/8$. Hence we have $d_0=\min\{(3/8+1/4)\times 2,6/8+3/8+1/8\}=5/4$.

Similarly we choose $n=06$ for the case $m+n$ is odd. The vanishing orders on $D_{06}$ of $\theta_{43}$, $\theta_{52}$, $\theta_{37}$, $\theta_{40}$, $\theta_{25}$, $\theta_{34}$ are all $1/8$, and all other theta constants in $\Omega_{77}$ do not vanish identically on $D{06}$. We also have $\ord_{D_{06}}D(77,64,13)=1/2$, $\ord_{D_{06}}D(77,51,26)=1/4$, and $\ord_{D_{06}}D(77,64,51)=\ord_{D_{06}}D(77,13,26)=1/8$, hence $d_1=\min\{5/4,1\}=1$.

To compute the vanishing orders on $P_V$, we now choose the standard symplectic 2-dim subgroup $V_0$ as in section 3.2. In this case $m+n_1,m+n_2$ are both even, and we can thus compute $p_3$. We will have
$\ord_{V_0}D(77,64,13)=\ord_{V_0}D(77,64,51)=1$, and $
\ord_{V_0}\theta_{75}=\ord_{V_0}\theta_{67}=\ord_{V_0}\theta_{76}=1$ and all the others are zero, hence $p_3=\min\{(1+1)\times 2,4\}=4$.

Similarly we choose $V_1$ generated by $n_1=[101,000],n_2=[000,100]$ to compute $p_1$. We have $\ord_{V_1}D(77,64,51)=1$, $\ord_{V_1}\theta_{43}=\ord_{V_1}\theta_{76}=1$, all others are non-vanishing. We hence have $p_1=\min\{1\cdot 2,1+1+1\}=2$.

Lastly, since the expression of the modular form is explicit, one checks by hand that the lowest order term in each case does not get cancelled.
\end{proof}

Combining Proposition \ref{y} and Proposition \ref{z}, we can verify Cukierman's result in \cite{Cu89}:
\begin{cor}
In $\overline{\mathcal{M}_3}$, we have
$$
[\overline{\HF}]=308\cdot\lambda-32\cdot\delta_0-76\cdot\delta_1.
$$
Also, the class $[\overline{HF}]$ in $\CA{3}$ is equal to $308\cdot L-32\cdot D$.
\end{cor}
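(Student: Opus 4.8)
The plan is to assemble the two preceding Propositions by direct substitution, so the only genuine content lies in checking that the numbers fit. The Proposition expressing the class in terms of the vanishing orders gives
$$
[\overline{\HF}]=308\cdot\lambda-(16d_0+12d_1)\cdot\delta_0-(10p_3+18p_1)\cdot\delta_1,
$$
where the coefficients $16,12,18,10$ are the orbit sizes coming from Propositions \ref{y} and \ref{z}, and Proposition \ref{g} supplies the four vanishing orders $d_0=\tfrac{5}{4}$, $d_1=1$, $p_3=4$, $p_1=2$. First I would carry out the arithmetic: $16d_0+12d_1=16\cdot\tfrac{5}{4}+12=20+12=32$ and $10p_3+18p_1=40+36=76$. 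Substituting these into the displayed formula immediately yields $[\overline{\HF}]=308\lambda-32\delta_0-76\delta_1$ in $\CM{3}$, which matches Cukierman's formula.

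For the class in $\CA{3}$ I would descend through the Torelli map, using the relations $\lambda=\bar{u}^*L$ and $\delta_0=\bar{u}^*D$ recorded in the Preliminaries. These identifications transfer the $\lambda$- and $\delta_0$-coefficients directly to the $L$- and $D$-coefficients. The step requiring attention is the fate of the $\delta_1$ term: since the Torelli map contracts $\Delta_1$ onto the locus $P=\A{1}\times\A{2}$, which has codimension greater than one in $\CA{3}$, the boundary class $\delta_1$ does not push forward to a divisor and so contributes nothing to the divisor class of $\overline{HF}$ in $\CA{3}$. This leaves $308L-32D$.

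The computation itself is entirely mechanical, since the substantive work is already contained in the vanishing-order calculations of Proposition \ref{g} together with the orbit counts. The one step I would single out — and the only place a reader might expect surprises — is the disappearance of the $\delta_1$ coefficient when passing to $\CA{3}$: this reflects the codimension of the contracted image $P$ rather than any cancellation in the modular form, and it is why the $\CA{3}$ class retains only the $L$- and $D$-terms.
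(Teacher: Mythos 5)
Your proof is correct and takes the same route as the paper: the paper's entire proof is the substitution $16\cdot\tfrac{5}{4}+12\cdot 1=32$ and $10\cdot 4+18\cdot 2=76$ into the formula $[\overline{\HF}]=308\lambda-(16d_0+12d_1)\delta_0-(10p_3+18p_1)\delta_1$, and your arithmetic matches. Your justification of the statement in $\CA{3}$ --- the coefficients of $\lambda$ and $\delta_0$ transfer to $L$ and $D$ via the Torelli map, while $\delta_1$ contributes nothing because $\Delta_1$ is contracted onto the codimension-two locus $P=\A{1}\times\A{2}$ --- is a correct and welcome filling-in of what the paper dismisses as ``follows easily.''
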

\begin{proof}
We only need to plug in the values $d_0=5/4,d_1=1,p_1=2,p_3=4$ in Proposition 3.6. And the second claim follows easily.
\end{proof}

\section{Boundary Strata of Higher Codimension}

Using the modular form $\Omega_{77}$, we can apply similar arguments to find the intersection of any boundary components of $\CM{3}$ with the closure of the hyperflex locus $\overline{\HF}$. As an application, we consider the boundary stratum $T\subset\CM{3}$ parameterizing stable curves consisting of two genus one curves intersecting at two nodes (so-called ``banana curves"). This boundary stratum is contained in $\Delta_0$ and is indeed an irreducible component of the self-intersection of $\Delta_0$. These curves are interesting examples of stable curves of non-pseudocompact type.

\begin{prop}\label{m}
The boundary locus $T$ is contained in the hyperflex locus $\overline{\HF}$.
\end{prop}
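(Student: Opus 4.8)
The plan is to detect $T$ through the modular form $\Omega_{77}$, exactly in the spirit of Section 3. A \emph{banana} curve $C_0=E_1\cup E_2$ glued at two nodes is not of compact type, but its dual graph has first Betti number $1$, so the abelian part of the generalized Jacobian has torus rank $1$ and abelian part $E_1\times E_2$. Hence $\bar u'$ maps $T$ into a rank-one boundary component $D_n$ of $\CA{3}(2)$, landing over the decomposable locus $\A{1}\times\A{1}\subset\A{2}$ in the genus-two ``abelian'' block. Although $T\subset\Delta_0\cap\Delta_0$ has codimension two in $\CM{3}$, I would therefore approach $C_0$ by a two-stage degeneration: first a plumbing family pinching the single homology class carried by the loop (the Fourier--Jacobi parameter of $D_n$, governed by \eqref{1} and Proposition \ref{h}), and then a specialization of the genus-two block $\tau_2\in\HH_2$ to the product locus $\tau_2=\diag(\sigma_1,\sigma_2)$.

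First I would fix a convenient pair $(m,n)$ with $m$ odd, chosen so that the banana configuration is adapted to $D_n$, and compute the leading Fourier--Jacobi coefficient of $\Omega_m$ along $D_n$. After factoring out the order of vanishing $d_{m,n}$ produced in Proposition \ref{g} (from the orders of the $\theta_\mu$ and of the Jacobi determinants $D(\cdots)$ in Proposition \ref{h}), the leading coefficient $\Omega_m^{(0)}$ is a modular expression in the genus-two theta constants $\theta_\mu(\tau_2)$, the genus-two theta gradients entering the $D(\cdots)$ factors, and the gluing data $b$. By the standard principle for restricting a Cartier divisor to a boundary divisor, the horizontal part $\overline{HF_m}\cap D_n$ is cut out inside $D_n$ by $\{\Omega_m^{(0)}=0\}$, so it is enough to show that $\Omega_m^{(0)}$ vanishes at the points of $D_n$ coming from banana curves.

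Next I would restrict $\Omega_m^{(0)}$ to the product locus via the factorization \eqref{2}: each genus-two theta constant with characteristic $\begin{bsmallmatrix}\ep_1\ep_2\\\delta_1\delta_2\end{bsmallmatrix}$ splits as $\theta\begin{bsmallmatrix}\ep_1\\\delta_1\end{bsmallmatrix}(\sigma_1)\,\theta\begin{bsmallmatrix}\ep_2\\\delta_2\end{bsmallmatrix}(\sigma_2)$, and likewise the genus-two gradients in the $D(\cdots)$ factors split through Proposition \ref{b}. The claim to establish is that, after this specialization, the leading term acquires a factor equal to an \emph{odd} genus-one theta constant, which vanishes identically; geometrically this expresses that the two contact points of the relevant bitangent collide in the limit, i.e.\ the discriminant-type expression $\Omega_m$ degenerates to zero. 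Tracking the characteristics through the two successive degenerations to exhibit this odd factor is the heart of the argument. Since $T$ is irreducible and containment in $\overline{\HF}$ is a closed condition, verifying the vanishing at a single (generic) banana point suffices; pushing forward along $p'$ and $\bar u'$ in diagram \eqref{n}, and using $p(HF_m)=HF$, then yields $T\subset\overline{\HF}$.

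The main obstacle will be the bookkeeping rather than any conceptual difficulty: isolating the leading Fourier--Jacobi coefficient of the explicit but lengthy $\Omega_{77}$ after dividing out the boundary vanishing order, then correctly specializing every genus-two theta constant and gradient to the product locus, identifying the surviving odd genus-one factor, and ruling out accidental cancellation among the remaining monomials. This is precisely the kind of explicit by-hand verification flagged at the end of the proof of Proposition \ref{g}, and it is what makes having the closed-form expression for $\Omega_{77}$ essential.
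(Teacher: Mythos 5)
Your reduction and your predicted vanishing mechanism are exactly the ones that drive the paper's proof: every monomial of $\Omega_{77}$, written in theta constants as in Theorem 0.1, contains either $\theta_{33}$ or $\theta_{37}$; both of these have $\ep_1=0$ and genus-two part $\begin{bsmallmatrix}1&1\\1&1\end{bsmallmatrix}$, so by \eqref{1} each survives into the leading Fourier--Jacobi coefficient along $D_n$ as the genus-two theta constant $\theta\begin{bsmallmatrix}1&1\\1&1\end{bsmallmatrix}(\tau',0)$ (independent of the gluing datum $b$), and on the product locus $\tau'=\diag(\sigma_1,\sigma_2)$ this splits by \eqref{2} into a product of two \emph{odd} genus-one theta constants, hence vanishes identically. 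So the check you defer as ``the heart of the argument'' is a finite verification against the explicit formula, and your prediction of its outcome is correct. One caveat in your divisor-restriction step: as sets, $\{\Omega_m^{(0)}=0\}$ is the union of $\CHF_m\cap D_n$ with the deeper strata $D_{n'}\cap D_n$, $n'\neq n$, so you must also observe that a generic point of $D_n$ lying over the product locus has torus rank one and therefore lies in no $D_{n'}$; only then does vanishing of $\Omega_m^{(0)}$ there place the point in $\CHF_m$.

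The genuine gap is your last step. From $\bar u'(T')\subset\CHF_m$, equivalently $\bar u(T)\subset\overline{HF}\subset\CA{3}$, you cannot formally conclude $T\subset\overline{\HF}$ in $\CM{3}$: the extended Torelli map is not injective along the boundary (it contracts $\Delta_1$ and can identify distinct boundary curves with the same semiabelian Jacobian), and containment of images under a non-injective map does not pull back; ``pushing forward along $p'$ and $\bar u'$'' does not address this. To close the gap you need an extra geometric argument, for instance: over a generic point of $\bar u(T)$ --- abelian part $E_1\times E_2$, extension datum with both coordinates nonzero --- \emph{every} stable curve in the Torelli fibre is itself a banana curve, because an irreducible one-nodal curve in that fibre would force $E_1\times E_2$ to be the Jacobian of a smooth genus-two curve (impossible, its theta divisor being reducible), while the other torus-rank-one configurations (say, an elliptic curve with two points identified, attached to an elliptic tail) give extension data with a vanishing coordinate; combining this with properness of $\bar u'$ and irreducibility of $T$ then yields $T\subset\overline{\HF}$. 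The paper avoids the issue entirely by never descending to $\CA{3}(2)$ for the conclusion: it pulls $\Omega_{77}$ back to Taniguchi's two-parameter plumbing family, where $2\pi i\tau_{11}=\ln s_1+\ln s_2+\text{(holomorphic)}$ and $\tau_{23}(0,0)=0$ as in \eqref{l}, so that $\{s_1=0\}$ and $\{s_2=0\}$ are precisely the two branches of $\Delta_0$ through $T$; after factoring out the boundary contribution $(s_1s_2)^{5/4}$ of Proposition \ref{g}, the residual factor still vanishes on $\{s_1=s_2=0\}$ but is divisible by no power of $s_1s_2$, so its zero locus is the horizontal part of $\mathrm{div}(\bar u'^*\Omega_{77})$, whose image under $p'$ is $\overline{\HF}$. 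You should either supply the Torelli-fibre argument above or, as the paper does, run your two-stage degeneration upstairs on $\CM{3}(2)$.
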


\begin{rmk}
This result was recently also shown by a different approach in \cite{Che15}.
\end{rmk}

To prove the proposition, we recall the general variational formula for the degeneration of the period matrix of a Riemann surface of genus $g$ with $n$ nodes. For $n=1$, see \cite{Yam80} and \cite{Fay73}, and for $n\geq 1$, see \cite{Tan89} and also \cite{Tan91}. For $i=1\ldots n$, fix elements  $[S_i]\in\pi_1(C)$ represented by simple closed curves $S_i$ with lengths $0\leq s_i\ll 1$. We also fix a homology basis $\{A_j, B_j\}_{j=1}^{g}$ such that for $1\leq i\leq n$, $S_i$ is homotopic to one of the $A_j$ (possibly with a sign).

\begin{lem}[{\cite[Thm 5]{Tan89}}]
For any $1\leq h,k\leq g$, the function
$$
f_{h,k}(s_1,\dots,s_n):=\exp\big(2\pi i\tau_{h,k}(s_1,\dots,s_n)\big)\cdot\prod_{i=1}^n s_i^{-N_{i,h}\cdot N_{i,k}}$$
is holomorphic in $0\leq s_i\ll 1$ for $i=1\ldots n$, where $N_{i,j}$ is the intersection product of $S_i$ and $B_j$, and $\big[\tau_{h,k}(s_1,\ldots, s_n)\big]_{g\times g}$ is the period matrix for $C(s_1,\ldots,s_n)$.
\end{lem}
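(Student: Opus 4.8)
The plan is to prove the lemma by analyzing the period integrals $\tau_{h,k}(s_1,\dots,s_n)=\int_{B_k}\omega_h$ directly, where $\{\omega_h\}_{h=1}^g$ is the basis of holomorphic differentials on $C(s_1,\dots,s_n)$ normalized by $\int_{A_l}\omega_h=\delta_{hl}$. First I would fix the plumbing model of the family: near the $i$-th pinching curve $S_i$ introduce coordinates $(x_i,y_i)$ on the two local branches with $x_iy_i=s_i$ (after a holomorphic reparametrization of the length parameter, so that $s_i$ is genuinely a holomorphic coordinate on the base), so that $C(s_1,\dots,s_n)$ is obtained from the normalization $\widetilde{C}_0$ of the nodal limit, with the $2n$ node-preimages marked, by gluing in annular necks $\{x_iy_i=s_i\}$. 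As $s_i\to 0$ each neck pinches and the normalized differential $\omega_h$ converges, uniformly on compacta away from the nodes, to a meromorphic differential of the third kind $\omega_h^{0}$ on $\widetilde{C}_0$ with at worst simple poles at the node-preimages.

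The singular part of $\tau_{h,k}$ comes entirely from the portions of $B_k$ that run through the pinching necks, and its coefficient is governed by the intersection numbers. Indeed, in homology $[S_i]=\sum_l N_{i,l}A_l$, so $\int_{S_i}\omega_h=\sum_l N_{i,l}\delta_{hl}=N_{i,h}$; by the residue theorem on the neck this says $\omega_h^{0}$ has residue $N_{i,h}/(2\pi i)$ at the $i$-th node. Integrating $\omega_h\approx\frac{N_{i,h}}{2\pi i}\frac{dx_i}{x_i}$ along the $N_{i,k}$ signed traversals that $B_k$ makes through the $i$-th neck produces a term $\frac{N_{i,h}N_{i,k}}{2\pi i}\log s_i$, while the integral over the compact part of $B_k$ converges to a finite period of $\omega_h^{0}$. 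Summing over all necks gives $\tau_{h,k}=\frac{1}{2\pi i}\sum_i N_{i,h}N_{i,k}\log s_i+(\text{bounded})$. As a check on the coefficient, the monodromy $s_i\mapsto e^{2\pi i t}s_i$ acts on $H_1$ by the Dehn twist $T_{S_i}(B_k)=B_k-(S_i\cdot B_k)[S_i]=B_k-N_{i,k}[S_i]$, shifting $\tau_{h,k}$ by $N_{i,h}N_{i,k}$ (the sign fixed by the orientation conventions, so that $\Ima\tau_{h,k}\to+\infty$), exactly as for $\frac{1}{2\pi i}\log s_i$. Exponentiating, $\exp(2\pi i\tau_{h,k})=\prod_i s_i^{N_{i,h}N_{i,k}}\cdot(\text{single-valued, non-vanishing})$, so $f_{h,k}$ is single-valued on the punctured polydisk.

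It then remains to upgrade ``single-valued and bounded'' to ``holomorphic across $\{s_i=0\}$'', which I would deduce from Riemann's removable-singularity theorem once boundedness of the finite part is established. This is the main obstacle: it requires the quantitative plumbing estimate showing that the finite part of each period integral converges to a finite limit, uniformly and holomorphically in the remaining moduli, with $\log s_i$ capturing precisely the divergence and no worse singularity hidden in the remainder. Concretely one splits $B_k$ into the part bounded away from all necks --- where $\omega_h\to\omega_h^{0}$ uniformly and the contribution is manifestly holomorphic in $(s_1,\dots,s_n)$ --- and the parts inside the necks, where a uniform estimate on $\omega_h^{0}$ is needed to show that its integral over the $i$-th neck is $\frac{N_{i,h}N_{i,k}}{2\pi i}\log s_i$ plus a holomorphic error. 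For $n=1$ this is precisely the Yamada--Fay expansion already quoted; the genuinely new difficulty for general $n$, carried out in \cite{Tan89}, is obtaining these estimates uniformly in all the $s_i$ jointly as several necks pinch simultaneously rather than one at a time.
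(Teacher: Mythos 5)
The first thing to note is that the paper does not prove this lemma at all: it is imported verbatim as \cite[Thm 5]{Tan89}, so there is no internal proof to compare against, and your outline must be judged as a standalone argument. As such it has a genuine gap --- one you yourself flag in your final paragraph. The residue computation ($\omega_h$ limiting to a third-kind differential with residue $N_{i,h}/(2\pi i)$ at the $i$-th node), the resulting singular term $\frac{1}{2\pi i}\sum_i N_{i,h}N_{i,k}\log s_i$, and the Dehn-twist monodromy check together establish only that $f_{h,k}$ is single-valued on the punctured polydisk: they identify the expected logarithmic divergence but give no control on the remainder. To conclude holomorphy across $\{s_1\cdots s_n=0\}$ by Riemann extension you need the finite part to be bounded, and to depend holomorphically on all the $s_i$ \emph{jointly, as several necks pinch simultaneously}; that uniform estimate is the entire analytic content of Taniguchi's theorem. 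Deferring it to \cite{Tan89} makes the argument circular, since the statement being proved \emph{is} \cite[Thm 5]{Tan89}.

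A complete proof would have to supply, at exactly that point, a quantitative comparison of $\omega_h$ on $C(s_1,\dots,s_n)$ with the limit differential $\omega_h^0$ --- for instance via the explicit plumbing construction and estimates on the necks, or via the kernel methods of \cite{Fay73} and \cite{Yam80} extended to several nodes --- showing that the difference of periods is holomorphic in all variables and vanishes at $s=0$. For $n=1$ this is Yamada's expansion quoted in Section 3 of the paper; for $n\geq 2$ the substance is precisely that the degenerations do not interact, i.e.\ that no mixed singularities (such as terms in $s_i/s_j$) appear in the remainder --- plausible from your picture, since the necks are disjoint, but not proved by it. Two smaller caveats: your identity $[S_i]=\sum_l N_{i,l}A_l$ uses $S_i\cdot A_l=0$ for all $l$, which holds in the paper's setup (each $S_i$ homotopic to some $A_j$) but not in the generality of Taniguchi's statement; and the convergence $\omega_h\to\omega_h^0$ uniformly on compacta away from the nodes, which you take as your starting point, is itself part of what has to be established.
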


For the boundary locus $T$, we have $g=3$ and $n=2$ in the above lemma. Furthermore, we choose the homology basis to be the standard one with intersection matrix $Id$, so $S_1$ and $S_2$ are both homotopic to $A_1$. By the lemma above, we have

\begin{equation}\label{l}
2\pi i\tau_{h,k}=\begin{cases}
\ln s_1+\ln s_2+f_{h,k}(s_1,s_2)&\text{for }(h,k)=(1,1)\\
f_{h,k}(s_1,s_2) & \text{otherwise.}
  \end{cases}
\end{equation}

We thus write $\tau=\begin{bsmallmatrix}
\tau_1&b_1&b_2\\
b_1&\tau_2&c\\
b_2&c&\tau_3\\
\end{bsmallmatrix}$, and recall Fourier-Jacobi expansion from (\ref{1}). We can then deduce the following facts about $\theta\begin{bsmallmatrix}
\ep\\
\delta\\
\end{bsmallmatrix}(\tau,0)$, which we will use in the proof of Proposition \ref{m}:
\begin{enumerate}

\item If $\ep_1=1$, then $\theta\begin{bsmallmatrix}\ep\\\delta\\\end{bsmallmatrix}(\tau,0)=\exp(\frac{1}{4}\pi i\tau_1)\cdot\exp(2\pi i\delta_1)\cdot\theta\begin{bsmallmatrix}\ep_2&\ep_3\\\delta_2&\delta_3\\\end{bsmallmatrix}\Big(\begin{bsmallmatrix}\tau_2&c\\c&\tau_3\end{bsmallmatrix},(\frac{b_1}{2},\frac{b_2}{2})\Big)+O(s_1)+O(s_2).$ Note that  due to (\ref{l}), we have $\exp(\pi i\tau_1)=s_1^{\frac{1}{2}}s_2^{\frac{1}{2}}\cdot \exp G(s_1,s_2)$ for some holomorphic function $G(s_1,s_2)$. Hence in this case the vanishing orders of $\theta\begin{bsmallmatrix}\ep\\\delta\\\end{bsmallmatrix}(\tau,0)$ with respect to $s_1$ and $s_2$ are both $\frac{1}{8}$.

\item If $\ep_1=0$, then similarly $\theta\begin{bsmallmatrix}\ep\\\delta\\\end{bsmallmatrix}(\tau,0)=\theta\begin{bsmallmatrix}\ep_2&\ep_3\\\delta_2&\delta_3\\\end{bsmallmatrix}(\begin{bsmallmatrix}\tau_2&c\\c&\tau_3\end{bsmallmatrix},0)+O(s_1)+O(s_2)$. By definition \cite{Tan89} of $c=f_{2,3}(s_1,s_2)$, we deduce that $c=0$ when $s_1=s_2=0$, i.e. when the curve hits the boundary $T$. In that case, we have the constant term $\theta\begin{bsmallmatrix}\ep_2&\ep_3\\\delta_2&\delta_3\\\end{bsmallmatrix}(\begin{bsmallmatrix}\tau_2&0\\0&\tau_3\end{bsmallmatrix},0)=\theta\begin{bsmallmatrix}\ep_2\\\delta_2\\\end{bsmallmatrix}(\tau_2,0)\cdot\theta\begin{bsmallmatrix}\ep_3\\\delta_3\\\end{bsmallmatrix}(\tau_3,0)=0$ if and only if $\ep_2=\delta_2=1$. Hence the only theta constants with characteristics that vanish when  $s_1=0$ and $s_2=0$ are $\theta_{33}(\tau,0)$ and $\theta_{37}(\tau,0)$, but by taking partial derivatives one can directly show that neither is divisible by any power of $(s_1\cdot s_2)$.

\end{enumerate}

\begin{proof}[Proof of Proposition \ref{m}]
As in Proposition \ref{h}. we choose the standard boundary component $D_{04}$ so that the two cases $\ep_1=0$ or $1$ correspond to the two orbits of the $\Gamma_g$-action on the pair $(m,04)$. Hence by the same argument as the proof of Proposition \ref{g}, we have
$$
\Omega_{77}(\tau,0)=(s_1\cdot s_2)^{\frac{5}{4}}\cdot F(s_1,s_2)
$$
for some holomorphic function $F(s_1,s_2)$. Moreover, by the expression of $\Omega_{77}$ given in Theorem 0.1, in each summand of $\Omega_{77}$ there is either a $\theta_{33}$ or a $\theta_{37}$ as a factor. Thus
$$
F(s_1,s_2)=\theta\begin{bsmallmatrix}1&1\\1&1\\\end{bsmallmatrix}(\begin{bsmallmatrix}\tau_2&c\\c&\tau_3\end{bsmallmatrix},0)+O(s_1)+O(s_2)
$$
where $\tau_i=f_{i,i}(s_1,s_2)$ ($i=2$ or $3$) and $c=f_{2,3}(s_1,s_2)$ (as in (\ref{l})) are holomorphic functions in $s_1$ and $s_2$, and $c(0,0)=0$. From the discussion directly proceeding the proof, $F(s_1,s_2)$ vanishes when $s_1=0$ and $s_2=0$, and $F(s_1,s_2)$ is not divisible by any power of $(s_1\cdot s_2)$.

The normal direction to $\Delta_0$ in $\CM{3}$ is given by $q=\exp(\pi i\tau_{11})$. $T$ is an irreducible component of the self-intersection of $\Delta_0$, and thus $s_1, s_2$ give the two normal directions. Because the modular form $\Omega_{77}$ vanishes along $T$ with higher order in $s_1,s_2$ than $q$, we conclude that the boundary stratum $T$ is contained in the hyperflex locus $\overline{\HF}$.

\end{proof}

\section{The Catalecticant Hypersurface}

This section is devoted to a discussion of a different locus of plane quartics. In 1868, J. L\"{u}roth discovered that a general plane quartic does not admit a polar pentagon despite a dimension count which suggests that it is possible. A plane quartic that admits a polar pentagon is called a {\em Clebsch quartic}. A detailed discussion can be found in \cite[Sec.~6.3]{Do12}.

Let $E$ be a vector space of dimension $3$, and let $F\in S^4(E^\vee)$ be a degree $4$ homogenous form on $\mathbb{P}(E)$. Consider the apolar map:
$$
ap^2_F:S^2(E)\to S^2(E^\vee), \ [\ldots,x_i\wedge x_j,\ldots]\mapsto \left[\ldots, \frac{\partial^2 F}{\partial x_i\partial x_j},\dots\right].
$$

Fix a basis of $E$ and a dual basis of $E^\vee$, the matrix of $ap^2_F$ in these bases is then called the {\em catalecticant matrix} of $F$, and is denoted by $\Cat_2(F)$.

It is easy to show \cite[Sec.~6.3.5]{Do12} that a quartic $C$ with the defining equation $F$ is Clebsch if and only if $\det\Cat_2(F)$ vanishes. The locus of such quartics in $\M{3}$ is called the {\em catalecticant hypersurface}, and the class of its closure $[\overline\Cat]\in\Pic(\CM{3})$ was computed in \cite{OS11}:
$$
[\overline\Cat]=56\lambda-6\delta_0-16\delta_1.
$$
We give a straightforward alternative derivation of this using modular forms.

Indeed, by the results of \cite{DPFSM}, a plane quartic can be written as in Equation \eqref{p} in terms of an Aronhold system of its bitangents. As in the case of the hyperflex, we discard the coefficients consisting of products of theta constants because they only vanish on some components of the hyperelliptic locus. Set
$$
F=(af)^2+(be)^2+(cd)^2-2afbe-2becd-2afcd,
$$
where $a,b,c,d,e,f$ are as in equation (\ref{p}).
\begin{prop}
With the above $F$, $\det\Cat_2(F)$ is the modular form whose zero locus in ${\mathcal A}_3$ is equal to the image of the locus of Clebsch quartics under the Torelli map.
\end{prop}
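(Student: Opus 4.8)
<br>

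The plan is to establish this result by leveraging the structural parallel with the hyperflex case (Theorem~\ref{a}) while accounting for the essential difference: here the quartic's defining equation $F$ appears directly, so the catalecticant condition $\det\Cat_2(F)=0$ is a polynomial identity in the theta gradients $b_{ij}$ and theta constants, rather than a condition extracted via the bitangent-intersection Lemma~\ref{f}. First I would make precise the claim that $F$ defines, up to the discarded product-of-theta-constants factors, the same plane quartic as $\det Q(\tau,0)$ in Equation~\eqref{p}: expanding $(af)^2+(be)^2+(cd)^2-2afbe-2becd-2afcd=(af-be-cd)^2-4\,be\,cd$ and comparing with the expression in \eqref{p} shows these agree as quartic forms in $x,y,z$ after removing the common factor $(\theta_{75}\theta_{52}\theta_{43})^4\cdot(\theta_{04}^2\theta_{73}\theta_{60})^2$, since $a,b,c,d,e,f$ are precisely the linear forms $b_{ij}$ scaled by the indicated theta-constant products. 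Thus $F$ is (a scalar multiple of) a genuine defining equation of the canonical image of $C$.

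Next I would verify the modular-form property. The entries $a,\dots,f$ are vector-valued modular forms of weight $\det^{1/2}\otimes std$ times scalar theta-constant factors, so $F$, being a homogeneous quartic in these, transforms under $\Gamma_3(2)$ by a definite power of $\det(C\tau+D)$ tensored with $S^4(std)$; the catalecticant $\Cat_2(F)$ is then a matrix of second partials, and $\det\Cat_2(F)$, as a $6\times 6$ determinant, multiplies these transformation factors to yield a scalar modular form. I would compute its weight explicitly by tracking the weight of each $b_{ij}$ (namely $\tfrac12+1$ from $\det^{1/2}\otimes std$, contributing to the degree-two differentiation) and each scalar theta factor (weight $\tfrac12$), which also serves as a consistency check against the divisor class $56\lambda$ expected from \cite{OS11}.

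The geometric heart of the argument is the classical fact, cited as \cite[Sec.~6.3.5]{Do12}, that a quartic with defining equation $F$ is Clebsch if and only if $\det\Cat_2(F)=0$. Granting this, the zero locus in $\A{3}(2)$ of $\det\Cat_2(F)$ consists exactly of those period matrices $\tau$ for which the associated quartic $C$ admits a polar pentagon, i.e.\ the preimage of the Clebsch locus. I would then argue, exactly as in the hyperflex case, that the discarded theta-constant factors vanish only on components of the hyperelliptic locus (using that each even theta-null cuts out a component of the hyperelliptic locus, which is disjoint from the smooth-quartic locus), so that over the non-hyperelliptic locus the vanishing of $\det\Cat_2(F)$ is equivalent to the Clebsch condition. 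Pushing forward under the level cover $p$ and the Torelli map $u$ identifies the image with the Clebsch locus in $\A{3}$.

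The main obstacle I anticipate is \emph{descent and well-definedness}: unlike $\Omega_{77}$, which is tied to a specific odd characteristic $(7,7)$, the catalecticant condition is intrinsic to $C$ and must be invariant under the full $\Gamma_3(2)$-action and independent of the choice of Aronhold system used to write $F$. I would need to confirm that changing the fundamental system of bitangents (equivalently, acting by $\Gamma_3$) rescales $F$ by a nonzero scalar depending on $\tau$, so that $\det\Cat_2(F)$ changes only by a nonvanishing modular factor and its zero locus is genuinely well-defined on $\A{3}(2)$; checking that the discarded factors behave compatibly under this action, and ruling out spurious extra components in the zero locus beyond the Clebsch and hyperelliptic loci (by a dimension or degree count matching the known class $[\overline{\Cat}]$), is where the real care is required.
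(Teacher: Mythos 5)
Your proposal is correct and follows essentially the same route as the paper: both reduce the statement to (i) the fact that $F$ is the reduced quartic equation from \cite{DPFSM} (your algebraic check that $(af)^2+(be)^2+(cd)^2-2afbe-2becd-2afcd$ equals the bracketed expression in Equation~\eqref{p} is right), (ii) Dolgachev's criterion that a quartic is Clebsch iff $\det\Cat_2(F)=0$, and (iii) modularity of $\det\Cat_2(F)$, which follows because its coefficients are polynomials in theta constants and theta gradients --- indeed the paper's own proof consists of point (iii) alone, treating (i), (ii), and the hyperelliptic-factor issue as settled by the preceding discussion. Your additional attention to descent --- that the zero locus of a $\Gamma_3(2)$-modular form defines a locus in $\A{3}$ only because the Clebsch condition is intrinsic to the curve and hence $\Gamma_3$-invariant, unlike the characteristic-dependent locus of $\Omega_{77}$ --- addresses a point the paper passes over silently, and is a worthwhile supplement rather than a different method.
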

\begin{proof}
We only need to show that $\det\Cat_2(F)$ is a modular form. With the identification $E\simeq H^0(C,K_C)^\vee$ we take the standard basis $w_1,w_2,w_3$ in $E$. $F$ has coefficients consisting of theta constants and theta gradients, as does $\det\Cat_2(F)$. Since the theta constants and gradients are modular forms, $\det\Cat_2(F)$ is a modular form.
\end{proof}
\begin{cor}
The expression $\det\Cat_2(F)$ is a modular form of weight $56$, and thus $[\Cat]=56\lambda\in\Pic{\mathcal M}_3$.
\end{cor}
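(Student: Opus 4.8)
The plan is to read off the $\lambda$-coefficient of $[\Cat]$ directly from the weight of the modular form $\det\Cat_2(F)$, in the same spirit as the derivation of $[\HF]=308\lambda$ in Section 2. Since the preceding Proposition already guarantees that $\det\Cat_2(F)$ is a modular form and identifies its zero locus with the (image of the) Clebsch locus, and since $\Pic_\QQ(\M{3})=\QQ\lambda$, it suffices to compute this weight and to check that the zero divisor is the reduced catalecticant hypersurface. First I would record how each of the six linear forms $a,\dots,f$ transforms. Each is a product of three theta constants, of weight $\tfrac12$ apiece, with one theta gradient $b_{ij}$, which is vector valued of weight $\det^{1/2}\otimes std$. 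Hence, writing $M=C\tau+D$, the coefficient vector of each of $a,\dots,f$ transforms by $\det(M)^2M$, so as a linear form in the coordinates $x=(x_1,x_2,x_3)$ one has $a(\gamma\tau;x)=\det(M)^2\,a(\tau;M^tx)$, and likewise for $b,\dots,f$. Because $F$ is a quartic, i.e.\ a sum of products of four of these linear forms, it follows that
$$
F(\gamma\tau;x)=\det(M)^{8}\,F(\tau;M^tx).
$$

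Next I would feed this into the classical covariance of the catalecticant. The expression $\det\Cat_2$ is a degree-$6$ invariant of ternary quartics, so by the classical weight formula $w=gd/n$ for an invariant of degree $g$ of a form of degree $d$ in $n$ variables, its weight is $w=6\cdot 4/3=8$; concretely $\det\Cat_2(F\circ A)=\det(A)^{8}\det\Cat_2(F)$ for $A\in GL_3$ (one checks the sign on the baby case of the discriminant of a binary quadratic). Now I combine the two ways in which $M$ enters the displayed transformation of $F$: scaling the quartic by the scalar $\det(M)^{8}$ multiplies every entry of the $6\times6$ catalecticant matrix by $\det(M)^8$ and hence its determinant by $(\det(M)^{8})^{6}=\det(M)^{48}$, while the substitution $x\mapsto M^tx$ contributes the factor $\det(M^t)^{8}=\det(M)^{8}$. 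Therefore
$$
\det\Cat_2\big(F(\gamma\tau)\big)=\det(M)^{48+8}\det\Cat_2\big(F(\tau)\big)=\det(C\tau+D)^{56}\det\Cat_2\big(F(\tau)\big),
$$
so $\det\Cat_2(F)$ is a scalar modular form of weight $56$.

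To conclude the class, I use that a weight-$k$ scalar modular form is a section of $L^{\otimes k}=\det(\mathbb{E})^{\otimes k}$, whose divisor is linearly equivalent to $k\,p^{*}L$ on $\A{3}(2)$. Unlike the hyperflex case, where $\Omega_{77}$ cut out only the single bitangent-indexed component $HF_{77}$ and the class of $\HF$ acquired the factor $28$ from summing over odd characteristics, the Clebsch condition is intrinsic to the quartic and independent of the level structure. Hence the zero divisor of $\det\Cat_2(F)$ is the full, reduced preimage $p^{*}\Cat$, with no splitting. This gives $p^{*}[\Cat]=56\,p^{*}L$; by injectivity of $p^{*}$ on $\Pic_\QQ$ and pullback through the Torelli map, $[\Cat]=56\lambda$ in $\Pic_\QQ(\M{3})=\QQ\lambda$.

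The main obstacle is the bookkeeping in the weight count: one must cleanly separate the scalar weight carried by the twelve theta constants and the $\det^{1/2}$ factors of the gradients from the standard-representation part carried by the coordinates, and then correctly superpose the overall scaling of $F$ (contributing $\det(M)^{48}$) with the linear coordinate substitution (contributing the invariant weight $\det(M)^{8}$). A secondary point to verify is that $\det\Cat_2(F)$ vanishes to order exactly one along $\Cat$—the catalecticant matrix drops rank simply at a general point of the hypersurface—and that, as with the quartic $F$ itself, any residual products of theta constants are supported on the hyperelliptic locus and so do not contribute to the $\lambda$-coefficient of the reduced divisor $\Cat$.
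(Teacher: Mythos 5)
Your computation of the weight is correct, and it takes a genuinely different route from the paper's. The paper expands the determinant: each entry of $\Cat_2(F)$ is a coefficient of a second-order derivative of $F$, hence a combination of products of $12$ theta constants and $4$ theta gradients, so every summand of the $6\times 6$ determinant is a product of $72$ theta constants and $24$ gradients; the paper then assigns weight $\tfrac12$ to each constant and an \emph{effective} scalar weight $\tfrac56$ to each gradient, getting $72\cdot\tfrac12+24\cdot\tfrac56=56$, together with a computer check that no common theta-constant factors occur. You instead combine the covariance $F(\gamma\tau;x)=\det(M)^8F(\tau;M^tx)$ with the classical fact that $\det\Cat_2$ is a degree-$6$, weight-$8$ invariant of ternary quartics, so that the coefficient scaling contributes $\det(M)^{48}$ and the variable substitution contributes $\det(M)^{8}$. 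This is cleaner, and it in fact justifies the paper's bookkeeping: the excess $\tfrac13=\tfrac56-\tfrac12$ per gradient, summed over the $24$ gradients, is exactly the invariant weight $8$ by which the $std$-parts of the gradients assemble into powers of $\det M$ --- a step the paper leaves implicit.

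However, your final paragraph contains a genuine error at the one point where the paper performs a real verification. You assert that any residual theta-constant factors of $\det\Cat_2(F)$ ``are supported on the hyperelliptic locus and so do not contribute to the $\lambda$-coefficient of the reduced divisor $\Cat$.'' That is false: the hyperelliptic locus is a divisor of \emph{nonzero} class ($[H_3]=9\lambda$ in $\Pic_\QQ(\M{3})=\QQ\lambda$), and the zero divisor of a single even theta constant on $\A{3}(2)$ has class $\tfrac12\,p^*L\neq 0$. So if $\det\Cat_2(F)$ were divisible by even theta constants --- as $\det Q(\tau,0)$ in \eqref{p} genuinely is, which is exactly why such factors had to be stripped off in the hyperflex argument --- then the class $56\,p^*L$ of its zero divisor would equal $p^*[\Cat]$ \emph{plus} a positive combination of hyperelliptic components, and the $\lambda$-coefficient of $[\Cat]$ would be strictly smaller than $56$. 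This is precisely what the paper's computer check (``no common factors among the entries of $\det\Cat_2(F)$'') rules out; your argument needs that verification, or some substitute for it, and cannot dismiss it. (The other point you flag --- that the vanishing order along $\Cat$ is exactly one --- is needed by both arguments and is left at the same implicit level in the paper.)
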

\begin{proof}
To compute the weight, note that in the basis $w_1$, $w_2$, $w_3$, each of $a,b,c,d,e,f$ has coefficients which are products of $3$ theta constants (of weight $\frac{1}{2}$) and one theta gradient (of weight $\frac{5}{6}$). Thus each of the coefficients of $F$ consists of $12$ theta constants and $4$ theta gradients. To compute $\det\Cat_2(F)$ we need to take the coefficients of the six second-order derivatives of $F$ (the columns of $\Cat_2(F)$). Note that the coefficients of second-order derivatives also consist of $12$ theta constants and $4$ theta gradients, thus each summand of the determinant of the $6\times 6$ catalecticant matrix is a product of $72$ theta constants and $24$ theta gradients. With the aid of a computer we can confirm that there are no common factors among the entries of $\det\Cat_2(F)$. Hence the weight of $\det\Cat_2(F)$ is $72\times 1/2+24\times 5/6=56$.
\end{proof}

Implementing the vanishing orders of theta constants and theta gradients as computed in Section 3, we also obtain with the aid of a computer the vanishing orders of $\det \Cat_2(F)$ on $D_n$ and $P_V$, resulting in a computation of the coefficients of $\delta_0$ and $\delta_1$ in the class $[\overline\Cat]$.


\end{document}